\newtheorem{theorem}{Theorem}[section]
\newtheorem{corollary}[theorem]{Corollary}
\newtheorem{proposition}[theorem]{Proposition}
\newtheorem{remark}[theorem]{Remark}
\newtheorem{example}[theorem]{Example}
\begin{document}

\title[$M(r,s)$-properties and the uniform Kadec-Klee property in JB$^*$-triples]{Geometric implications of the $M(r,s)$-properties and the uniform Kadec-Klee property in JB$^*$-triples}

\author[Li]{Lei Li}
\address[Li]{School of Mathematical Sciences and LPMC,
  Nankai University, Tianjin 300071,  China}
\email{leilee@nankai.edu.cn}

\author[Nieto]{Eduardo Nieto}

\author[Peralta]{Antonio M. Peralta}
\address[Nieto and Peralta]{Departamento de An\'{a}lisis Matem\'{a}tico, Facultad de Ciencias, Universidad de Granada, 18071 Granada, Spain}
\email{enieto@ugr.es, aperalta@ugr.es}
\date{\today}
\subjclass[2000]{46B20, 46B22, 17C65, 46B04}
\keywords{property $(M)$; property $M(r,s)$; Kadec-Klee property; Uniform Kadec-Klee property; spin factors}
\thanks{First author partially supported by NSF of China (11301285 and 11371201). Second and third authors partially supported by Junta de Andaluc\'{\i}a grant FQM375 and the Spanish Ministry of Economy and Competitiveness and European Regional Development Fund project no. MTM2014-58984-P}

\maketitle
\begin{abstract}
We explore new implications of the $M(r,s)$ and $M^*(r,s)$ properties for Banach spaces. We  show that a Banach space $X$ satisfying property $M(1,s)$ for some $0<s\leq 1$, admitting a point $x_{0}$ in its unit sphere at which the relative weak and norm topologies agree, satisfies the generalized Gossez-Lami Dozo property. We establish sufficient conditions, in terms of the $(r, s)$-Lipschitz weak$^*$ Kadec-Klee property on a Banach space $X$, to guarantee that its dual space satisfies the UKK$^*$ property. We determine appropriate conditions to assure that a Banach space $X$ satisfies the $(r, s)$-Lipschitz weak$^*$ Kadec-Klee property. These results are applied to prove that every spin factor satisfies the UKK property, and consequently, the KKP and the UKK properties are equivalent for real and complex JB$^*$-triples.
\end{abstract}

\section{Introduction}

Banach spaces which are $M$-ideals in their bidual have been intensively studied during decades due the rich geometric and isometric properties that they enjoy (compare the monograph \cite{HarWerWer}). J.C. Cabello and the second author of this note explore a weaker notion in \cite{CN1998}. Following the just quoted reference, a Banach space $X$ satisfies the \emph{$M(r,s)$-inequality} (with $r, s \in ]0, 1]$) if $$ \|x^{***}\| \geq r\|\pi (x^{***})\| + s\|x^{***} - \pi (x^{***})\|,
$$ for all $x^{***}$ in the third dual, $X^{***}$, of $X$, where $\pi$ is the canonical
projection of $X^{***}$ onto $X^{*}$. The $M(1, 1)$-inequality is the classical
notion of $M$-ideal \cite{HarWerWer}. Inspired by the so called properties $(M)$ and $(M^*)$, studied in \cite{HarWerWer}, J.C. Cabello and the second author of this note introduce in \cite{CN2000} properties $M(r, s)$ and $M^*(r, s)$. For the concrete definition, we recall that, given $r, s \in \left ]0, 1 \right ]$, a Banach space $X$ has {\it property} $M(r, s)$ (resp. $M^* (r, s)$) if whenever $u, v \in X$ (resp. $X^*$) with $\|u\| \leq \|v\|$ and
$(x_{\alpha})$ is a bounded weakly (resp. weak$^*$-) null net in $X$ (resp. $X^*$), then
$$
\limsup_{\alpha}\|ru + sx_{\alpha}\| \leq \limsup_{\alpha}\|v + x_{\alpha}\|.
$$  It should be noted here that properties $M(1,1)$ and $M^* (1, 1)$ are precisely properties $(M)$ and $(M^*)$ in the terminology of \cite{HarWerWer}.\smallskip

Properties $M(r,s)$ have been successfully applied in fixed point theory (compare \cite{GarFalSim, DomGarFalJap1998, CN2000}). 
\smallskip

The fixed point theory motivated the developing of many interesting properties in Banach space theory. That is also the case of the \emph{Kadec-Klee (or Radon-Riesz) property} (KKP in the sequel). We recall that a Banach space has the KKP if any sequence in the unit sphere whose weak limit is also in the unit sphere, is indeed norm convergent. The uniform Kadec-Klee property for the weak topology on a Banach space was introduced by R. Huff \cite{Hu} as a useful substitute for uniform convexity, especially in many non-reflexive spaces. D. van Dulst and B. Sims \cite{DS}
 showed that the uniform Kadec-Klee property for weak and weak$^*$ topologies implies weak (resp. weak$^*$) normal structure. 
\smallskip

It is well-known (see \cite{Ar, Si}) that the Schatten $p$-classes $C_p$, $1\leq p<\infty$, have the KKP. It has been shown by C. Lennard \cite{Le1} that the direct argument given by J. Arazy \cite{Ar} for trace-class operators can be refined to show that $L^{1} (H)$, the space
of trace class operators on an arbitrary infinite-dimensional Hilbert space $H$ satisfies a stronger property called the uniformly Kadec-Klee in the weak$^*$ topology UKK$^*$ (see section \ref{sec: def} for the detailed definitions). From a somewhat different viewpoint, it is a classical theorem of F. Riesz that norm convergence for sequences in the unit sphere of $L^1[0,1]$ coincides with convergence in measure. Appropriate uniform version of this theorem may be found in \cite{Le2}.
\smallskip

Independently from the fixed point theory, The Kadec-Klee property has been deeply studied in certain particular classes of Banach spaces including C$^*$-algebras and JB$^*$-triples in connection with the Alternative Dunford-Pettis property (compare \cite{AP, BP04, BP05} and \cite{BunPe07}). Proposition 2.13 in \cite{BP04} provides a complete description of those JB$^*$-triples satisfying the KKP, namely, a JB$^*$-triple satisfies this property if and only if it is finite-dimensional or a Hilbert space or a spin factor. A similar conclusion holds for real JB$^*$-triples (compare \cite[Proposition 3.13]{BP05}). It is a natural open problem to ask wether a JB$^*$-triple satisfying the KKP satisfies or not the stronger UKK property. This is one of the motivations for this note.\smallskip

In section \ref{sec: Mrs prop} we explore new geometric implications of the $M(r,s)$ and $M^*(r,s)$ properties. In \cite[Theorem 2.4 and Corollary 2.5]{GarFalSim},
J. Garc\'{\i}a-Falset and B. Sims proved that if a Banach space $X$ has
property $(M)$, and $S_{X}$ contains a point at which the relative weak and norm topologies agree, then $X$ has $w$-ns. We establish a generalization of this result by showing that a Banach space $X$ satisfying property $M(1,s)$ for some $0<s\leq 1,$ and admitting a point $x_{0}$ in its unit sphere at which the relative weak and norm topologies agree, satisfies the generalized Gossez-Lami Dozo property (see Proposition \ref{p generalized GarciaFalset and Sims}). In Theorem \ref{t 22} we prove that for a Banach space $X$ satisfying property $M^* (1,s)$ for some $0<s\leq 1$, reflexivity of $X$ admits many equivalent reformulations in terms of classical properties like Radon-Nikod{\'y}m property, PCP, and KKP.\smallskip

Proposition \ref{Proposition 2.4} provides sufficient conditions, in terms of the $(r, s)$-Lipschitz weak$^*$ Kadec-Klee property on a Banach space $X$, to guarantee that its dual space satisfies the UKK$^*$ property (see definitions below). Theorem \ref{proposition 3.1.} sets appropriate conditions to assure that a Banach space $X$ satisfies the $(r, s)$-Lipschitz weak$^*$ Kadec-Klee property. These two results, appropriately combined, allow us to establish, in section \ref{sec: UKKP spin}, that every spin factor satisfies the UKK property (Theorem \ref{spin01}), and consequently, the KKP and the UKK properties are equivalent for real and complex JB$^*$-triples (see Corollary \ref{c UKK equals KKP} and Proposition \ref{p UKK equals KKP for real}). We also obtain, with similar arguments and tools, another proof of the result, established by C. Lennard, which asserts that $L^{1} (H)$ satisfies the UKK$^*$ property (Theorem \ref{t Lennard trace class}).

\section{Background and basic definitions}\label{sec: def}

Throughout the paper, the closed unit ball and the unit sphere of a given Banach space $X$ will be denoted by $B_{X}$ and $S_{X}$, respectively.\smallskip

A sequence $(x_n)$ in a Banach space $X$ is called \textit{separated} (respectively, \textit{$\varepsilon$-separated}) if sep$(x_n):=\inf\{\|x_n-x_m\|: n\neq m\}>0$ (respectively, $\geq\varepsilon$). It is known that $X$ has the \textit{Kadec-Klee property} if and only if every separated weakly convergent sequence $(x_n)$ in the closed unit ball of $X$ converges to an element of norm strictly less than one.\smallskip

Following standard notation, we say that $X$ satisfies the \textit{uniform Kadec-Klee} (UKK) \emph{property}\label{def UKKP} if for each $\varepsilon>0$ there exists $\delta>0$ such that for
every sequence $(x_n)$ in the closed unit ball of $X$ satisfying sep$(x_n)\geq \varepsilon$ and $(x_n)\to x$ weakly, then it holds that $\|x\|<1-\delta$ (see \cite{Hu}).\smallskip

Clearly, if $X$ has the Schur property (i.e. norm and weak convergent sequences in $X$ coincide) or if $X$ is uniformly convex then $X$ has the UKK property.
While uniformly convex spaces are necessarily reflexive, it turns out that many classical non-reflexive spaces, e.g. the Hardy space $H^1$ of analytic functions on the ball or on the polydisc in $\mathbb{C}^{N}$ \cite{BDDL94}, the Lorentz space $L_{p,1}(\mu)$ \cite{CDLT91, DH94} and the trace class operators $L^1(H)$ \cite{Hu}, satisfy the UKK or the UKK$^*$ property.\smallskip

A Banach space $X$ is said to be \textit{nearly uniformly convex} if for every $\varepsilon>0$, there exists a $\delta$, $0<\delta<1$ such that for any sequence $\{x_n\}$ in the closed unit ball $B_X$ of $X$ with $\text{sep}(x_n) \geq \varepsilon,$
then $\text{conv}(\{x_n\})\cap (1-\delta)B_X \neq \emptyset$.\smallskip

We recall next the definition of the uniform weak$^*$ Kadec-Klee property, which is somehow due to B. Sims \cite{S86}. Suppose that $X$ is a dual Banach space and $\varepsilon>0$. $X$ is \textit{$\varepsilon$-uniformly Kadec-Klee in the weak*-topology} ($\varepsilon$-UKK$^*$ in the sequel) if there exists $\delta\in (0,1)$ such that whenever $C$ is a weak$^*$ compact, convex subset of unit ball $B_X$ with $\sup\{\text{sep}(x_n): (x_n)\subset C\}\geq \varepsilon$ it follows that $C\cap (1-\delta)B_X \neq \emptyset$.
$X$ is said to have the \textit{uniform Kadec-Klee property in the weak*-topology} (UKK$^*$ property) if it is $\varepsilon$-UKK$^*$ for all $\varepsilon>0$.\smallskip

If $B_{X^*}$ is weak$^*$ sequentially compact, the weak$^*$ uniform Kadec-Klee property can be reformulated in terms of the definition given for UKK property in page \pageref{def UKKP}, but replacing weak convergence with weak$^*$ convergence (see \cite[\S 3]{DS}).

\section{Banach spaces satisfying the $M(r,s)$-properties}\label{sec: Mrs prop}

In this section we shall revisit the additional properties satisfied by Banach spaces possessing $M(r,s)$-properties in connection with previous contributions.\smallskip

We recall that a Banach space $X$ has {\it weak} {\it
normal structure} (in short {\it w-ns}) if every weakly compact convex subset $K$ of $X$ containing more than one point admits a {\it nondiametral point}, that is, there exists $x$ in $K$ such that $$\displaystyle{{\rm diam}\ (K)> \sup\left\{\|x - y\|:\ y \in K\right\}}.$$ When $X$ is a dual Banach space, if in the above definition, the weak topology is replaced with the weak$^*$-topology we say that $X$ has \emph{weak$^*$ normal structure} (w$^*$-ns).\smallskip

Another related property is the \emph{generalized Gossez-Lami Dozo property} (GGLD property in short). A Banach space $X$ satisfies the GGLD property if for every weakly null sequence $(x_{n})$ in $X$ such that $\lim_{n} \|x_{n}\| = 1$, we have that
$\displaystyle{D[(x_{n})] > 1}$, where
$$
D[(x_{n})]  = \limsup_{n}\left (\limsup_{m}\|x_{n} - x_{m}\|\right
).
$$ It is known that every Banach space satisfying the GGLD property has $w$-ns (see \cite{JiMel1992}).\smallskip

In \cite[Theorem 2.4 and Corollary 2.5]{GarFalSim},
J. Garc\'{\i}a-Falset and B. Sims proved that if a Banach space $X$ has
property $(M)$, and $S_{X}$ contains a point at which the relative
weak and norm topologies agree, then $X$ has $w$-ns. Making use of a different technique, we show next that the class of Banach spaces for which the conclusion of the above result of Garc\'{\i}a-Falset and Sims holds also includes Banach spaces satisfying property $M(1,s)$ for some $0<s\leq 1$. We prove that a stronger conclusion is also true. In Example \ref{example CaNiOj 4.4} below we present a Banach space satisfying the $M(1,s)$ property for a fixed $0<s<1$, but failing the $(M)$ property.

\begin{proposition}\label{p generalized GarciaFalset and Sims}
Let $X$ be a Banach space having property $M(1,s)$ for some $0<s\leq 1$. If there exists a point $x_{0} \in
S_{X}$ at which the relative weak and norm topologies agree, then $X$ has the GGLD property.
\end{proposition}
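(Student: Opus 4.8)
The plan is to argue by contradiction, combining three ingredients: property $M(1,s)$, the weak lower semicontinuity of the norm, and the weak-to-norm continuity at $x_0$. So I would suppose that $X$ fails the GGLD property, fix a weakly null sequence $(x_n)$ with $\lim_n \|x_n\| = 1$ and $D[(x_n)] \leq 1$, and aim for an impossible conclusion, namely $\|x_n\| \to 0$.

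The first and central step is to feed the continuity point $x_0$ into property $M(1,s)$, placing it in the slot that carries the factor $s$. Since $\lim_n\|x_n\|=1$ but the norms may lie slightly below $1$, I would apply $M(1,s)$ not to $x_0$ directly but to the perturbed vector $u=(1-\varepsilon)x_0$, whose norm $1-\varepsilon$ satisfies $\|u\|\le\|x_n\|$ for all large $n$. Taking $v=x_n$ and the bounded weakly null net $(-x_m)_m$, property $M(1,s)$ yields $\limsup_m\|(1-\varepsilon)x_0 - s\,x_m\|\le \limsup_m\|x_n - x_m\|$ for all large $n$. The left-hand side is independent of $n$, so taking $\limsup_n$ gives a bound by $D[(x_n)]$; a routine estimate $\big|\,\|(1-\varepsilon)x_0 - s x_m\| - \|x_0 - s x_m\|\,\big|\le \varepsilon$ then lets me send $\varepsilon\to 0$ and conclude $\limsup_m\|x_0 - s\,x_m\|\le D[(x_n)]\le 1$.

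For the second step, since $s\,x_m\to 0$ weakly, the vectors $x_0 - s\,x_m$ converge weakly to $x_0$, so weak lower semicontinuity of the norm gives $\liminf_m\|x_0 - s\,x_m\|\ge \|x_0\|=1$. Combined with the previous bound, this forces $\lim_m\|x_0 - s\,x_m\| = 1 = \|x_0\|$. Finally I would invoke the hypothesis on $x_0$: normalizing, the vectors $z_m := (x_0 - s\,x_m)/\|x_0 - s\,x_m\|$ lie in $S_X$ and, because the norms tend to $1$, still converge weakly to $x_0$; agreement of the relative weak and norm topologies at $x_0$ then forces $z_m\to x_0$ in norm, hence $x_0 - s\,x_m\to x_0$ in norm, hence $\|x_m\|\to 0$. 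This contradicts $\lim_n\|x_n\|=1$ (here $s>0$ is what makes the factor harmless), so $D[(x_n)]>1$ and $X$ has the GGLD property.

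The step I expect to be the main obstacle is the correct use of property $M(1,s)$ under the constraint $\|u\|\le\|v\|$: because the norms $\|x_n\|$ only tend to $1$, one cannot simply take $u=x_0,\ v=x_n$, and the $(1-\varepsilon)$-perturbation together with the passage $\varepsilon\to 0$ is needed to recover the clean inequality. The role of $r=1$ is essential here, since it is precisely the weak limit $x_0$ having full norm $1$ that, through lower semicontinuity, pins the norms down to $\|x_0\|$ and activates the continuity point; a value $r<1$ would only yield $\liminf\ge r$ and break the argument. A secondary point requiring care is packaging the weak-to-norm agreement on $S_X$ so as to draw a norm-convergence conclusion for the sequence $(x_0 - s\,x_m)$, which does not literally lie on the sphere; this is handled by the normalization to $z_m\in S_X$ above.
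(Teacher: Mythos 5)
Your proposal is correct and follows essentially the same route as the paper's proof: argue by contradiction, apply property $M(1,s)$ with a vector of norm slightly less than $1$ against $v=x_n$ and the weakly null sequence to get $\limsup_m\|x_0-s x_m\|\leq D[(x_n)]\leq 1$, pin this limit at $\|x_0\|=1$ via weak lower semicontinuity, and then use the continuity point to force $\|x_m\|\to 0$, contradicting $\lim_n\|x_n\|=1$. The only differences are cosmetic — you use the perturbation $(1-\varepsilon)x_0$ where the paper takes an arbitrary $x$ in the open unit ball and passes to the closed ball, and you spell out the normalization $z_m=(x_0-sx_m)/\|x_0-sx_m\|$ needed to invoke the weak-norm agreement on $S_X$, a step the paper leaves implicit.
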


\begin{proof}
Suppose that $X$ fails to have the {\it
GGLD} property. Then there exists a weak null sequence $(x_{n})$
in $X$ satisfying
$$
\lim_{n}\|x_{n}\| = 1 \phantom{ab} {\rm and}\phantom{ab} D
[(x_{n})]\leq  1.
$$

Let $x \in X$ with $\|x\| < 1$. Then, for $m$ large enough we have $\|x\|
\leq \|x_{m}\|$. So, we deduce, by property $M(1,s)$, that
$$
\limsup_{n}\|x - sx_{n}\|\leq \limsup_{n}\|x_{m} - x_{n}\|.
$$

Hence, taking limit on $m$, we have that $\limsup_{n}\|x -
sx_{n}\| \leq 1$. Therefore, by the triangular inequality, $\limsup_{n}\|x -
sx_{n}\| \leq 1$ for all $x \in B_{X}$. In particular, by the weak
lower semi-continuity of the norm, $\limsup_{n}\|x_{0} - sx_{n}\|
= \|x_{0}\|=1$. So, we deduce from the hypothesis on $x_0$ that $\lim_{n}\|x_{\sigma(n)}\| = 0$, for a suitable subsequence $(x_{\sigma(n)})$ of $(x_n)$,  which is a contradiction.
\end{proof}

We recall that a Banach space $X$ has the \emph{point of continuity property} (\emph{PCP} in short) if every non-empty (weakly) closed subset $K$ of $B_X$ admits a point at which the identity map on $K$ is weak-norm continuous (compare \cite{EdgarWheeler84}, \cite[\S III]{GhossGodMauSchacher}).\smallskip

A wide list of geometric properties are equivalent for Banach spaces satisfying $M^*(1,s)$ property for some $1\geq s>0$ (compare \cite[Corollary 2.5]{GarFalSim}).\smallskip

\begin{theorem}\label{t 22} Let $X$ be a Banach space having property $M^* (1,s)$ for some $0<s\leq 1$. Then the following assertions are
equivalent:
\begin{enumerate}[$(i)$]
\item $X$ is reflexive;
\item $X$ has the Radon-Nikod{\'y}m property;
\item $X$ has the PCP;
\item There is a point in $S_{X}$ at which the relative weak and norm topologies sequentially agree;
\item $X$ has the KKP.
\item $X$ doesn't contain an isomorphic copy of $c_{0}$.
\end{enumerate}
\end{theorem}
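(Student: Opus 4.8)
The plan is to obtain all six equivalences from two cycles of implications sharing $(i)$, isolating the single place where property $M^*(1,s)$ does the essential work. I would first dispatch the implications valid for an arbitrary Banach space. That $(i)\Rightarrow(ii)\Rightarrow(iii)$ is classical: reflexive spaces have the Radon--Nikod{\'y}m property, and the RNP implies the PCP. For $(iii)\Rightarrow(vi)$ I would use that the PCP is an isomorphic invariant inherited by closed subspaces while $c_0$ fails it (every point of $B_{c_0}$ is displaced by the weakly null perturbations $\tfrac12 e_n$ without leaving the ball); hence a space with the PCP contains no isomorphic copy of $c_0$. For $(i)\Rightarrow(iv)$ I would pass through $(ii)$: an RNP space has denting points of $B_X$, every denting point lies on $S_X$ and is a point of weak-to-norm (in particular sequential) continuity of the identity, so $(iv)$ holds.

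Next I would prove $(iv)\Rightarrow(v)$ by the technique of Proposition \ref{p generalized GarciaFalset and Sims}, now using the property $M(1,s)$ of $X$, which is implied by $M^*(1,s)$. Let $x_0\in S_X$ be a point of sequential agreement and suppose, towards a contradiction, that a separated sequence $(x_n)\subseteq B_X$ converges weakly to some $x$ with $\|x\|=1$. Writing $y_n=x_n-x$, a separated weakly null sequence, the $M(1,s)$-inequality gives, for every $w$ with $\|w\|<1\leq\|x\|$, the estimate $\limsup_n\|w+s\,y_n\|\le\limsup_n\|x+y_n\|=\limsup_n\|x_n\|\le 1$, and hence $\limsup_n\|w+s\,y_n\|\le 1$ for all $w\in B_X$. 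Taking $w=x_0$ and using weak lower semicontinuity of the norm (since $x_0+s\,y_n\to x_0$ weakly) yields $\|x_0+s\,y_n\|\to 1=\|x_0\|$. Sequential agreement at $x_0$ then forces $y_n\to 0$ in norm, contradicting the separation of $(x_n)$. Therefore every such weak limit has norm strictly less than one, which is precisely the KKP.

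It remains to close both cycles by returning to reflexivity, and this is the heart of the matter and the only genuine use of the hypothesis. I would prove the contrapositive in a single statement: \emph{if $X$ has $M^*(1,s)$ and is not reflexive, then $X$ contains an almost isometric copy of $c_0$ and fails the KKP}, which delivers simultaneously $(vi)\Rightarrow(i)$ and $(v)\Rightarrow(i)$. The engine is James's characterization of non-reflexivity, which furnishes $\theta\in(0,1)$ together with $\theta$-separated biorthogonal-type sequences $(x_n)\subseteq B_X$ and $(f_n)\subseteq B_{X^*}$; after the usual reductions (differences and a subsequence) one extracts a bounded weak$^*$-null sequence in $X^*$, and the one-sided asymptotic inequality of $M^*(1,s)$ forces a $\sup$-type, rather than an $\ell_1$-sum-type, asymptotic behaviour. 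Converting this into genuine unconditional $c_0$-structure --- equivalently, via the Bessaga--Pe\l{}czy\'nski theorem, producing the copy of $c_0$ inside $X$ and at the same time a separated weakly convergent norm-one sequence witnessing the failure of the KKP --- is the step I expect to be the main obstacle, together with the bookkeeping of the parameter $s\in(0,1]$ in place of $s=1$ (the case $s=1$ being \cite[Corollary 2.5]{GarFalSim}). Once $(vi)\Rightarrow(i)$ and $(v)\Rightarrow(i)$ are available, the two cycles $(i)\Rightarrow(ii)\Rightarrow(iii)\Rightarrow(vi)\Rightarrow(i)$ and $(i)\Rightarrow(iv)\Rightarrow(v)\Rightarrow(i)$ close, and all six assertions are equivalent.
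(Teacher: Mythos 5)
Your outline handles the routine implications correctly, and in places differently from the paper: you derive $(iii)\Rightarrow(vi)$ from heredity of the PCP and its failure in $c_0$ (the paper only uses the trivial $(i)\Rightarrow(vi)$), and your $(iv)\Rightarrow(v)$ is essentially the paper's argument. But there is a genuine gap, and it sits exactly where you yourself place ``the heart of the matter'': you never actually prove $(vi)\Rightarrow(i)$ or $(v)\Rightarrow(i)$ --- you sketch a hoped-for argument via James's characterization of non-reflexivity and Bessaga--Pe\l{}czy\'nski, and explicitly concede that extracting the $c_0$-structure is ``the main obstacle.'' Since both of your cycles pass through these implications, the theorem is not proved. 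For comparison, the paper closes the loop by citation rather than from scratch: by \cite[Proposition 3.1]{CN2000}, property $M^*(1,s)$ yields both property $M(1,s)$ and the $M(1,s)$-inequality on $X^{***}$; then \cite[Corollary 3.4(1)]{CN1998} asserts that a non-reflexive space satisfying the $M(1,s)$-inequality contains an isomorphic copy of $c_0$, which gives $(vi)\Rightarrow(i)$; and for $(iv)\Rightarrow(i)$ it combines Proposition \ref{p generalized GarciaFalset and Sims} (under $(iv)$, $X$ has the GGLD property) with \cite[Theorem 8]{Dow} (a non-reflexive space with this $M$-structure fails the GGLD property), a contradiction. Finally $(v)\Rightarrow(i)$ is free because $(v)\Rightarrow(iv)$ is immediate. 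Reproving the cited results of \cite{CN1998} is a substantial piece of work, not bookkeeping.

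There is also a conceptual flaw in your plan for $(v)\Rightarrow(i)$ that would persist even if you completed the James/Bessaga--Pe\l{}czy\'nski step: an isomorphic copy of $c_0$ inside $X$ does not contradict the KKP, because the KKP is not an isomorphic invariant --- $c_0$ admits an equivalent locally uniformly rotund norm (Day's norm), hence an equivalent norm with the KKP, so a subspace isomorphic to $c_0$ can sit inside a space having the KKP. To contradict the KKP (or the GGLD property) one needs the almost isometric structure you mention in passing, namely an \emph{asymptotically isometric} copy of $c_0$ in the sense of \cite{Dow}, and producing that quantitatively sharp containment from non-reflexivity plus the $M$-structure hypotheses is precisely the content of the results of \cite{CN1998} that the paper invokes. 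This is also why the paper never argues against the KKP directly on the return trip: it routes $(v)\Rightarrow(iv)$ and then kills $(iv)$ through the GGLD property, an isometric property that asymptotically isometric copies of $c_0$ genuinely destroy by Dowling's theorem.
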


\begin{proof} Let us start with an observation. By \cite[Proposition 3.1]{CN2000}, $X$ satisfies property $M(1,s)$ and the $M(1,s)$-inequality.

It is well known that every reflexive space has the Radon-Nikod{\'y}m property so $(i) \Rightarrow (ii)$.  The implications $(ii)\Rightarrow (iii) \Rightarrow (iv),$ $(v)\Rightarrow (iv)$, and $(i) \Rightarrow (vi)$ are also well known.\smallskip

$(iv) \Rightarrow (v)$ Let $x_{0}$ be a point in $S_{X}$ at which the relative weak and norm
topologies on $S_{X}$ sequentially agree. Let $x \in S_{X}$ and let
$(x_{n})$ be a sequence in $X$ with $(x_{n})
\stackrel{w}{\longrightarrow} x$ and $\|x_{\alpha}\|
\longrightarrow 1$. By property $M(1,s)$ and the weak lower
semi-continuity of the norm,
$$
1 \leq \limsup_{n}\|x_{0} + s(x_{n} - x)\| \leq
\limsup_{n}\|x + (x_{n} - x)\| = 1.
$$ Thus, by assumptions, $\lim_{n}\|x_{n} - x\| = 0$.\smallskip

If $X$ is non-reflexive then, by \cite[Corollary 3.4$(1)$]{CN1998}, $X$ contains an isomorphic copy of $c_{0}$. Thus, we have $(vi) \Rightarrow (i)$. If we assume $(iv)$, Proposition \ref{p generalized GarciaFalset and Sims} implies that $X$ has the {\it GGLD} property. However, by \cite[Theorem 8]{Dow}, $X$ fails to have the {\it GGLD} property, which is a contradiction. This proves  $(iv) \Rightarrow (i)$.
\end{proof}

\begin{remark}\label{r M(1,s) and sequential weak norm continuity at a point} The arguments given in the proof of $(iv) \Rightarrow (v)$ in Theorem \ref{t 22} above, actually show that a Banach space $X$ satisfying the $M(1,s)$-property and admitting a point in $S_{X}$ at which the relative weak and norm topologies sequentially agree always satisfies the KKP.
\end{remark}

Following \cite[Definition 2.3]{GoKalLan2000}, given $r, s \in ]0, 1 ]$, we will say that a Banach space $X$ satisfies the {\it $(r, s)$-Lipschitz
weak$^*$ Kadec-Klee property} (in short, {\it $(r, s)$-LKK$^{*}$
property}) if for every $x^{*} \in X^{*}$ and every weak$^{*}$
null sequence $(x_{n}^{*} )$ in $X^{*}$,
$$
\limsup_{n}\| x^{*} + x_{n}^{*}\| \geq r\|x^{*}\| +
s\limsup_{n}\|x_{n}^{*}\| .
$$

It is obvious that the $(r, s)$-LKK$^{*}$ property implies
property $M^{*} (r,s)$ (property $M^{*} (r, s)$ and its
sequential version are equivalent for separable spaces -see
\cite[pp. 300, 301]{HarWerWer}).

\begin{corollary}\label{Corollary 2.3} Let $X$ be a Banach space satisfying the $(1, s)$-LKK$^*$ property. Then the relative weak and norm topologies on $S_{X}$ do not coincide at any point in $S_{X}$.
\end{corollary}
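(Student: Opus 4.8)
The plan is to argue by contradiction. Suppose there is a point $x_{0} \in S_{X}$ at which the relative weak and norm topologies on $S_{X}$ agree; in particular they agree sequentially at $x_{0}$. Since the $(1,s)$-LKK$^{*}$ property implies property $M^{*}(1,s)$, the standing hypothesis of Theorem \ref{t 22} is met, and the assumed coincidence furnishes exactly condition $(iv)$ of that theorem. I would then read off from Theorem \ref{t 22} that $X$ is reflexive and enjoys the KKP. These two consequences, namely weak compactness of $B_{X}$ and the Kadec--Klee property, are the only tools the contradiction will rely on. Note that the statement is meaningful only when $X$ is infinite-dimensional, since otherwise $X^{*}$ carries no non-trivial weak$^{*}$-null sequence and the hypothesis is vacuous; I therefore assume $X$ infinite-dimensional throughout.

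Fix a norming functional $x_{0}^{*} \in S_{X^{*}}$ with $x_{0}^{*}(x_{0}) = 1$, and choose a weak$^{*}$-null sequence $(f_{n})$ in $S_{X^{*}}$ (available by the Josefson--Nissenzweig theorem, or directly since $X^{*}$ is infinite-dimensional and reflexive). The heart of the argument is to feed \emph{large} multiples of $x_{0}^{*}$ into the LKK$^{*}$ inequality. For each integer $m$, applying the $(1,s)$-LKK$^{*}$ property to $x^{*} = m\,x_{0}^{*}$ and the sequence $(f_{n})$ yields $\limsup_{n} \|\,m x_{0}^{*} + f_{n}\,\| \geq m + s$. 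Hence I can select indices $n(m) \uparrow \infty$ and points $x^{(m)} \in S_{X}$ almost norming $m x_{0}^{*} + f_{n(m)}$. Splitting the resulting inequality and using $x_{0}^{*}(x^{(m)}) \leq 1$ and $f_{n(m)}(x^{(m)}) \leq 1$ \emph{separately}, I expect to obtain simultaneously $x_{0}^{*}(x^{(m)}) \to 1$ and $\liminf_{m} f_{n(m)}(x^{(m)}) \geq s$.

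To finish, I would pass to a weakly convergent subsequence $x^{(m_{j})} \rightharpoonup \bar{x}$, which is legitimate by reflexivity. The bias towards $x_{0}^{*}$ forces $x_{0}^{*}(\bar{x}) = 1$, so $\|\bar{x}\| = 1$ and $\bar{x} \in S_{X}$; this is the decisive point, as it places the weak limit back onto the sphere. The KKP then upgrades weak convergence to norm convergence, $\|x^{(m_{j})} - \bar{x}\| \to 0$. Since $(f_{n})$ is weak$^{*}$-null and each $f_{n}$ is a unit vector, I get $f_{n(m_{j})}(x^{(m_{j})}) = f_{n(m_{j})}(\bar{x}) + f_{n(m_{j})}(x^{(m_{j})} - \bar{x}) \to 0$, contradicting $\liminf_{m} f_{n(m)}(x^{(m)}) \geq s > 0$.

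The step I expect to be the main obstacle is precisely guaranteeing that the constructed sequence converges weakly to a point of the sphere rather than to an interior point: a naive choice of norming points for the $f_{n}$ may converge weakly to $0$, where the KKP supplies no information. Introducing the large multiple $m x_{0}^{*}$ is the device that pins the weak limit onto $S_{X}$, and balancing $m$ against the approximation errors so that the values $f_{n(m)}(x^{(m)})$ remain bounded below by $s$ is the delicate bookkeeping in the argument.
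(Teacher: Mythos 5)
Your proof is correct, but it takes a genuinely different route from the paper's. The paper stays entirely inside $X$: from the $(1,s)$-LKK$^*$ property it deduces, via \cite[Proposition 2.5]{CN1998}, that $X$ is Asplund and hence (being infinite-dimensional) not Schur, so there is a weakly null sequence $(x_n)$ in $S_X$; then \cite[Lemma 2.5]{GoKalLan2000} together with weak lower semicontinuity of the norm yields $\limsup_n\|x_0+sx_n\|=1$, and the assumed weak--norm agreement at $x_0$ forces $\|x_{\sigma(n)}\|\to 0$ along a subsequence, a contradiction. You instead push the contradiction hypothesis through condition $(iv)$ of Theorem \ref{t 22} to extract reflexivity and the KKP, and then work on the dual side: Josefson--Nissenzweig (or reflexivity plus infinite-dimensionality) supplies a weak$^*$-null sequence in $S_{X^*}$, and feeding the large multiples $mx_0^*$ into the LKK$^*$ inequality produces almost-norming points $x^{(m)}\in S_X$ with $x_0^*(x^{(m)})\to 1$ and $f_{n(m)}(x^{(m)})\geq s-o(1)$; the functional $x_0^*$ pins the weak limit of $(x^{(m)})$ onto the sphere, the KKP upgrades weak to norm convergence, and the surviving mass $s$ carried by the $f_{n(m)}$ gives the contradiction. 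This bookkeeping is sound (choose $n(m)\uparrow\infty$ with $\|mx_0^*+f_{n(m)}\|\geq m+s-1/m$, then split using that both functional values are at most $1$), and there is no circularity, since Theorem \ref{t 22} precedes the corollary. What each approach buys: the paper's argument is shorter and never needs reflexivity, but it leans on the external Godefroy--Kalton--Lancien lemma that transfers the dual LKK$^*$ inequality to weakly null sequences in $X$ itself; your argument applies the LKK$^*$ inequality only where it is stated, in $X^*$, at the cost of invoking the heavier Theorem \ref{t 22} and Josefson--Nissenzweig.

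Two minor points. In the complex case your splitting step should be phrased with real parts, after rotating each $x^{(m)}$ by a unimodular scalar so that $(mx_0^*+f_{n(m)})(x^{(m)})$ is real and positive; this is routine. Second, your explicit restriction to infinite-dimensional $X$ is not cosmetic but necessary: in finite dimensions the $(1,s)$-LKK$^*$ property holds trivially (every weak$^*$-null sequence is norm null) while the weak and norm topologies agree at every point of $S_X$, so the statement as printed is false there and silently assumes infinite-dimensionality --- the paper's own proof does too, at the step passing from Asplund to non-Schur. Your justification for the restriction ("the hypothesis is vacuous") is slightly off --- the hypothesis is vacuously \emph{satisfied}, so it is the conclusion that fails --- but the decision itself is the right one.
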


\begin{proof} Arguing by contradiction, we suppose the existence of a point $x_{0} \in
S_{X}$ at which the relative weak and norm topologies on $S_{X}$ coincide.\smallskip

By \cite[Proposition 2.5]{CN1998}, $X$ is an Asplund space. Thus, $X$ does not satisfy Schur property.
So, there exists a weakly null sequence $(x_{n})$ in $S_{X}$. We deduce, by the weak lower semi-continuity of the norm and \cite[Lemma 2.5]{GoKalLan2000}, that
$$
\limsup_{n}||x_{0} + s x _{n}|| = 1,
$$ which implies, from our assumptions, that $\|x_{\sigma(n)}\| \to 0$, for a certain subsequence $(x_{\sigma(n)})$, which is impossible.
\end{proof}

Let us fix some notation. Given $\varepsilon
> 0$ we denote
$$
\pi_{1,\varepsilon}^+ = \left\{(r, s) \in (0,1]^2: \frac{1-r}{s} < \varepsilon\right\}.
$$ For a Banach space $X,$ we write
$$
\hbox{LKK}^*(X) = \{(r, s) \in (0,1]^2 :\ X \phantom{a}\mbox{\rm satisfies the}\phantom{a}
(r, s)\mbox{\it -LKK}^* \phantom{a} {\rm property}\}.
$$

We establish now some results related to the UKK$^*$ property. We recall that for every separable Banach space $X$, the closed unit ball of $X^*$ is weak$^*$ sequentially compact. It is also known that $B_{X^*}$ is weak$^*$ sequentially compact whenever $X^*$ does not contain a copy of $\ell_1$. In
particular, $B_{X}$ is weak$^*$ sequentially compact whenever $X$ is reflexive (cf. \cite[Chapter XIII]{Die}).

\begin{proposition}\label{Proposition 2.4} Let $X$ be a Banach space. If $\hbox{LKK}^*(X) \cap \pi_{1,\varepsilon}^+ \neq \emptyset$ for all $\varepsilon > 0$ {\rm(}in
particular, if $X$ satisfies the $(1, s)$-LKK$^*$ property{\rm)}, then
$X^*$ has the UKK$^*$ property.
%
%
%
\end{proposition}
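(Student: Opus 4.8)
The plan is to verify the weak$^*$-compact-convex formulation of the UKK$^*$ property recalled in Section \ref{sec: def} directly. Fix $\varepsilon>0$. Since by hypothesis $\hbox{LKK}^*(X)\cap \pi_{1,\varepsilon/2}^+\neq\emptyset$, I fix once and for all a pair $(r,s)$ for which $X$ satisfies the $(r,s)$-LKK$^*$ property and $\frac{1-r}{s}<\frac{\varepsilon}{2}$. The goal is to produce from this single pair a number $\delta>0$, depending only on $\varepsilon$ (through $r,s$), witnessing that $X^*$ is $\varepsilon$-UKK$^*$.

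Before the estimate I must pass from weak$^*$ compact convex sets to weak$^*$ convergent sequences, because the $(r,s)$-LKK$^*$ property is a statement about weak$^*$ null sequences. For this I would argue, as in the proof of Corollary \ref{Corollary 2.3}, that the chosen pair yields property $M^*(r,s)$ and hence, via \cite[Proposition 3.1]{CN2000}, the $M(r,s)$-inequality, so that $X$ is an Asplund space by \cite[Proposition 2.5]{CN1998}; consequently $(B_{X^*},w^*)$ is sequentially compact. Then, given a weak$^*$ compact convex set $C\subseteq B_{X^*}$ with $\sup\{\mathrm{sep}(x_n):(x_n)\subset C\}\geq\varepsilon$, I fix a small $\eta>0$, choose a sequence $(x_n^*)\subset C$ with $\mathrm{sep}(x_n^*)\geq \varepsilon-\eta$, and use weak$^*$ sequential compactness to pass to a subsequence $(x_{n_k}^*)$ converging weak$^*$ to some $x^*$; here $x^*\in C$ since $C$ is weak$^*$ closed.

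The computational core then runs as follows. The sequence $y_k^*:=x_{n_k}^*-x^*$ is weak$^*$ null, so applying the $(r,s)$-LKK$^*$ property to $x^*$ and $(y_k^*)$ gives
\[
1\geq \limsup_k\|x_{n_k}^*\|=\limsup_k\|x^*+y_k^*\|\geq r\|x^*\|+s\,L,\qquad L:=\limsup_k\|x_{n_k}^*-x^*\|,
\]
where the first inequality holds because $(x_{n_k}^*)\subset B_{X^*}$. On the other hand, the triangle inequality together with $\mathrm{sep}(x_{n_k}^*)\geq\varepsilon-\eta$ forces $L\geq (\varepsilon-\eta)/2$: were infinitely many terms to satisfy $\|x_{n_k}^*-x^*\|<(\varepsilon-\eta)/2$, two of them would lie at distance $<\varepsilon-\eta$. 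Combining the two facts yields $\|x^*\|\leq \big(1-s(\varepsilon-\eta)/2\big)/r$, and since $\frac{1-r}{s}<\frac{\varepsilon}{2}$ one may fix $\eta$ so small that $\delta:=\big(s(\varepsilon-\eta)/2-(1-r)\big)/r>0$. This gives $\|x^*\|\leq 1-\delta$, i.e. $x^*\in C\cap(1-\delta)B_{X^*}$; as $\delta$ does not depend on $C$, the $\varepsilon$-UKK$^*$ property follows, and letting $\varepsilon$ vary gives the UKK$^*$ property.

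The main obstacle is precisely the topological reduction of the second paragraph. The $(r,s)$-LKK$^*$ hypothesis is sequential, whereas the definition of UKK$^*$ quantifies over possibly non-metrizable weak$^*$ compact convex sets, so one genuinely needs weak$^*$ sequential compactness of $B_{X^*}$ — here extracted from Asplundness — in order to produce a bona fide weak$^*$ convergent sequence to which the LKK$^*$ inequality applies. Once such a sequence is available, the remaining estimate is entirely elementary.
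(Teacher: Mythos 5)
You have essentially reproduced the paper's own proof: the core estimate --- applying the $(r,s)$-LKK$^*$ inequality to the weak$^*$ limit $x^*$ of a separated sequence, using $\limsup_k\|x_{n_k}^*-x^*\|\geq\varepsilon/2$ (up to your harmless $\eta$) together with $\frac{1-r}{s}<\frac{\varepsilon}{2}$ to force $\|x^*\|\leq\frac{1-s\varepsilon/2}{r}=1-\delta<1$ --- is identical to the paper's, as is the preliminary step of securing weak$^*$ sequential compactness of $B_{X^*}$ from the Cabello--Nieto results before running the sequential argument. The only (minor) divergence is the citation used for that preliminary step: the paper notes that the chosen pair may be taken with $r+s>1$ and concludes that $X$ contains no isomorphic copy of $\ell_1$ via \cite[Proposition 3.1]{CN2000} and \cite[Corollary 2.8]{CN1998}, whereas you invoke Asplundness via \cite[Proposition 2.5]{CN1998}; this works equally well provided that proposition applies to pairs with $r<1$ (in Corollary \ref{Corollary 2.3} the paper only uses it with $r=1$, and the $M(r,s)$-inequality is vacuous for small $r,s$), so you should either confirm it covers the case $r+s>1$ or argue, as the paper does, through non-containment of $\ell_1$.
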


\begin{proof} We observe that the hypothesis $\hbox{LKK}^*(X) \cap \pi_{1,\varepsilon}^+ \neq \emptyset$ for all $\varepsilon > 0$ implies that $B_{X^*}$ is weak$^*$ sequentially compact. Indeed, since the $(r, s)$-LKK$^{*}$ property implies property $M^{*} (r,s)$, we deduce that, for every $\varepsilon>0$, there exist $r,s\in (0,1]$ with $\frac{1-r}{s} <\varepsilon$ such that $X$ satisfies property $M^{*} (r,s)$. In particular, we can assure that $X$ satisfies the $M^{*} (r,s)$ property for certain $r,s$ with $r+s>1$. Proposition 3.1 \cite{CN2000} and Corollary 2.8 in \cite{CN1998} assure that $X$ does not contain an isomorphic copy of $\ell_1$, and hence $B_{X^*}$ is weak$^*$ sequentially compact.\smallskip

Let us fix an arbitrary $\varepsilon> 0$. Let $(x_{n}^*)$ be a sequence in
$B_{X^*}$ converging to $x^*$ in the weak$^*$ topology with ${\rm
sep}(x_{n}^*) \geq \varepsilon$ and consider $(r, s) \in \hbox{LKK}^*(X) \cap \pi_{1,\frac\varepsilon2}^+$.
Then we have
$$
1 \geq \limsup_{n}\|x_{n}^*\| = \limsup_{n}\|x^* + (x_{n}^*
- x^*)\| \geq r\|x^*\| + s\limsup_{n}\|x_{n}^* - x^*\| .
$$

On the other hand, since ${\rm sep}(x_{n}^*) \geq \varepsilon$, we have
that $\limsup_{n}\|x_{n}^* - x^*\| \geq \varepsilon/2$. Therefore,
$$ \|x^*\| \leq \frac{1 - s \frac\varepsilon2}{r} . $$ Since $\displaystyle{\frac{1-r}{s} < \frac{\varepsilon}{2}}$, we have that
$\displaystyle{\delta = 1 - \frac{1 - s \frac\varepsilon2}{r} > 0}$.
\end{proof}

We can establish now a key tool to deal with the $(r, s)$-LKK$^*$ property. We recall that $\limsup$ and $\liminf$ of a bounded net of real numbers can be defined in a similar manner as for sequences.

\begin{theorem}\label{proposition 3.1.} Let $X$ be a Banach space
and $r, s \in ]0, 1]$. If there exists a net $(K_{j})$ of compact
operators on $X$ 
satisfying 
$\displaystyle \lim_{j} \|K_{j}^* (x^*)- x^*\|=0$ for all $x^* \in X^*$, and
\begin{equation}\label{eq shrinking in the space} \limsup_{j} \sup_{x, y \in B_{X}} \|r K_{j}(x) + s (y - K_{j} (y))\| \leq 1,
\end{equation} then $X$ satisfies the $(r, s)$-LKK$^*$ property.
\end{theorem}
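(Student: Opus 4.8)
The plan is to reformulate the geometric hypothesis \eqref{eq shrinking in the space} as a dual inequality for the adjoints $K_j^*$, and then to feed into that inequality the functional $x^*+x_n^*$, using the compactness of each $K_j$ to annihilate the contribution of $(x_n^*)$ after applying $K_j^*$.

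First I would dualize \eqref{eq shrinking in the space}. For fixed $j$, writing $\|w\|=\sup_{\phi\in B_{X^*}}\phi(w)$ and using $\phi(K_j z)=(K_j^*\phi)(z)$, one has $\|rK_j(x)+s(y-K_j(y))\|=\sup_{\phi\in B_{X^*}}\big(r(K_j^*\phi)(x)+s(\phi-K_j^*\phi)(y)\big)$; taking the supremum over $x,y\in B_X$ and interchanging suprema, the variables $x$ and $y$ decouple, yielding
$$\sup_{x,y\in B_X}\|rK_j(x)+s(y-K_j(y))\|=\sup_{\phi\in B_{X^*}}\big(r\|K_j^*\phi\|+s\|\phi-K_j^*\phi\|\big).$$
Setting $\lambda_j:=\sup_{\phi\in B_{X^*}}\big(r\|K_j^*\phi\|+s\|\phi-K_j^*\phi\|\big)$, the hypothesis reads $\limsup_j\lambda_j\le 1$, and by homogeneity $r\|K_j^*\phi\|+s\|\phi-K_j^*\phi\|\le\lambda_j\|\phi\|$ for every $\phi\in X^*$.

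Next, given $x^*\in X^*$ and a weak$^*$-null (hence bounded) sequence $(x_n^*)$, I would apply this to $\phi=x^*+x_n^*$. The crucial analytic input is that each $K_j$ is compact, so $K_j^*$ carries bounded weak$^*$-null sequences to norm-null ones: indeed $\|K_j^*x_n^*\|=\sup_{z\in K_j(B_X)}|x_n^*(z)|\to 0$, because $(x_n^*)$ converges to $0$ uniformly on the relatively compact set $K_j(B_X)$. Hence, for fixed $j$, as $n\to\infty$ one has $\|K_j^*(x^*+x_n^*)\|\to\|K_j^*x^*\|$, while $\|(x^*+x_n^*)-K_j^*(x^*+x_n^*)\|$ differs from $\|(x^*-K_j^*x^*)+x_n^*\|$ by a quantity tending to $0$. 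Taking $\limsup_n$ in the dual inequality therefore gives, for each $j$,
$$\lambda_j\,\limsup_n\|x^*+x_n^*\|\ge r\|K_j^*x^*\|+s\,\limsup_n\|(x^*-K_j^*x^*)+x_n^*\|,$$
and since $\big|\,\|(x^*-K_j^*x^*)+x_n^*\|-\|x_n^*\|\,\big|\le\|x^*-K_j^*x^*\|$, the last $\limsup_n$ is at least $\limsup_n\|x_n^*\|-\|x^*-K_j^*x^*\|$.

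Finally I would let $j$ run along the net. The hypothesis $\lim_j\|K_j^*x^*-x^*\|=0$ forces $\|K_j^*x^*\|\to\|x^*\|$ and $\|x^*-K_j^*x^*\|\to 0$, while $\limsup_j\lambda_j\le 1$ controls the left-hand coefficient; taking $\limsup_j$ of the resulting inequality collapses it to $\limsup_n\|x^*+x_n^*\|\ge r\|x^*\|+s\limsup_n\|x_n^*\|$, which is precisely the $(r,s)$-LKK$^*$ property. The step I expect to demand the most care is the dualization of \eqref{eq shrinking in the space} together with the orderly handling of the two-fold passage to the limit—first in $n$ (using compactness), then in $j$ (using the hypothesis on $K_j^*$)—so that every error term vanishes in the correct order; the compactness-driven fact that $K_j^*$ maps weak$^*$-null sequences to norm-null ones is the analytic ingredient that makes the whole argument run.
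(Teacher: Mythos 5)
Your proposal is correct and follows essentially the same route as the paper's own proof: dualize \eqref{eq shrinking in the space} into the statement $\limsup_j \sup_{\phi \in B_{X^*}} \left( r\|K_j^*\phi\| + s\|\phi - K_j^*\phi\| \right) \leq 1$, apply it to $x^* + x_n^*$, use compactness of each $K_j$ to obtain $\|K_j^* x_n^*\| \to 0$, and pass to the limit first in $n$ and then in $j$ via $\lim_j \|K_j^* x^* - x^*\| = 0$. The only divergence is bookkeeping: the paper manages the double limit with explicit $\varepsilon$, $m_0$, $j_0$ choices, while you use homogeneity (the constants $\lambda_j$) together with triangle-inequality error terms, which amounts to the same argument.
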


\begin{proof} Let us first observe that, by \eqref{eq shrinking in the space}, we can easily deduce that $(K_{j})$ is a bounded net. Since for each $x^*\in B_{X^*}$ we have $$ r\|K_{j}^*
x^*\|+ s\|x^* - K_{j}^* x^*\| =  \sup_{x\in B_{X}} |r x^* K_j (x)| + \sup_{y\in B_{X}} |s x^*(y -K_j (y))|$$
$$= \sup_{x,y\in B_{X}} |r x^* K_j (x) +  s x^*(y -K_j (y))|\leq \sup_{x, y \in B_{X}} \|r K_{j}(x) + s (y - K_{j} (y))\|,$$ it can be easily seen that \begin{equation}\label{eq first ineq}
\limsup_{j}\sup_{x^* \in B_{X^*}}\left (r\|K_{j}^*
(x^*)\|+ s\|x^* - K_{j}^* (x^*)\|\right )\leq 1.
\end{equation} The equivalence of \eqref{eq shrinking in the space} and \eqref{eq first ineq} was already established in \cite[Proposition 2.3]{Ni2003} for non-necessarily compact operators. The argument above is included here for completeness reasons.\smallskip

Now, let us fix $x^* \in X^*$ and a weak$^*$ null sequence  $(x_{n}^*)$ in $X^*$. Given an arbitrary $\varepsilon>0$, there exists a natural $m_0$ such that $$\sup_{m\geq m_0} \|x^*+ x_m^* \| \leq \limsup_{m} \|x^*+x_m^* \| +\varepsilon.$$ Applying \eqref{eq first ineq}, we have
$$ \limsup_{j} \sup_{m\geq m_0} r \|K_{j}^* (x^* +
x_{m}^*)\|+ s \|(x^* + x_{m}^* ) - K_{j}^*(x^* +
x_{m}^*) \|
$$
$$
\leq \limsup_{m}\|x^* + x_{m}^*\|+\varepsilon.
$$ To simplify the notation, let us set $$a_j:=\sup_{m\geq m_0} r \|K_{j}^* (x^* +
x_{m}^*)\|+ s \|(x^* + x_{m}^* ) - K_{j}^*(x^* +
x_{m}^*) \|.$$

Since $\limsup_j a_j \leq \limsup_{m}\|x^* + x_{m}^*\|+\varepsilon$,  there exists $j_0$ such that for each $j\geq j_0$ we have $\sup_{l\geq j} a_l < \limsup_j a_j +\varepsilon\leq \limsup_{m}\|x^* + x_{m}^*\|+2 \varepsilon$. That is, the inequality $$ r \|K_{j}^* (x^* +
x_{m}^*)\|+ s \|(x^* + x_{m}^* ) - K_{j}^*(x^* +
x_{m}^*) \| < \limsup_{m}\|x^* + x_{m}^*\|+2 \varepsilon,$$ holds for every $j\geq j_0$ and $m\geq m_0$.\smallskip

Since for a fixed subindex $j$, $\lim_m \|K_j^* (x_m^*) \|=0,$ fixing an arbitrary $j\geq j_0$ and taking limit $\limsup_m$ in the above inequality, we get $$ r \|K_{j}^* (x^* )\|+ s \limsup_m \|(x^* + x_{m}^* ) - K_{j}^*(x^* ) \| \leq \limsup_{m}\|x^* + x_{m}^*\|+2 \varepsilon,$$ for every $j\geq j_0.$ Finally, taking $\limsup_{j\geq j_0}$ we deduce from the hypothesis that  $$r \|x^*\|+ s \limsup_m \| x_{m}^* \| \leq \limsup_{m}\|x^* + x_{m}^*\|+2 \varepsilon,$$ the desired conclusion follows from the arbitrariness of $\varepsilon$.
\end{proof}

\begin{remark}\label{remark shrinking compact approximation inequality in the dual space}{\rm We observe that, accordingly to the arguments given in the proof of Theorem \ref{proposition 3.1.}, the assumption in \eqref{eq shrinking in the space} can be replaced with the inequality \eqref{eq first ineq} and the conclusion of the Theorem remains unaltered (c.f. \cite[Proposition 2.3]{Ni2003}).}
\end{remark}

Let us observe that a bounded net $(K_j)$ of compact operators on a Banach space $X$ satisfying
$\displaystyle\lim_{j} \| K_{j} (x) - x\|=0$ for all $x \in X$ and $\displaystyle \lim_{j} \|K_{j}^* (x^*)
- x^*\|=0$ for all $x^* \in X^*$, is termed a \emph{shrinking compact approximation of the identity} in many references (see for example \cite[Definition VI.4.16]{HarWerWer}).\smallskip

We complete this section with a series of example that illustrate the optimality and novelty of our previous results.

\begin{example}\label{example fails KKP and M1s and satisfies LKKstar}{\rm For $1 < p < +\infty$, consider
the equivalent renorming of $\ell_{p}$, $X = \mathbb{C}
\oplus_{\infty} \ell_{p}$, where the usual norm is considered on
$\ell_{p}$. Then $X$ satisfies the $(r, s)$-LKK$^*$ property for
all positive $r$ and $s$ with $r^{p} + s^{p} \leq 1$. Indeed, Let $\pi_{n},$ ($n \in \mathbb{N}$), denote the
natural projection of $X$ onto the first $n$th coordinates.
Given $(\alpha, x), (\beta, y) \in B_{X}$ and $n \in \mathbb{N}$, we
have
$$
\|r \pi_{n+1} (\alpha, x) + s ((\beta, y) - \pi_{n+1}(\beta, y))\|^{p} =
$$
$$
= \max\left\{(r |\alpha |)^{p}, r^{p} \sum_{i=1}^{n} |x_{i}|^{p} +
s^{p} \sum_{j=n+1}^{+\infty} |y_{j}|^{p}\right\} \leq r^{p} + s^{p} \leq 1.
$$ Theorem \ref{proposition 3.1.} implies that $X$ satisfies the $(r, s)${\it
-LKK}$^*$ property for evert $r,s\in (0,1]$ with $r^{p} + s^{p} \leq 1$.\smallskip

We claim that $X$ fails to have the KKP and property $M(1,s)$ for every $0<s\leq 1$. To see this, let $\{e_{n}\}$ denote the canonical basis in $\ell_{p}$, and we define $x_{n} = (1,e_{n})$,
its is clear that $\|x_{n}\| = 1$ and $(x_{n})
\stackrel{w}{\longrightarrow} x_0:=(1, 0)$. However $\|x_{n} - x_0\| = \|e_{n}\| = 1$, for every natural $n$. Clearly, $X$ is reflexive. If $X$ had property $M(1,s)$ for a real $s\in (0,1]$, Theorem \ref{t 22} would imply that $X$ has KKP, which is impossible. 
}\end{example}

We present next a non-reflexive example.
It is well known \cite[Theorem 3.b.9 and Theorem 3.d.4]{fegam} that the James tree space $JT$ has $w$-ns and the KKP. Applying Proposition \ref{Proposition 2.4}, we improve these facts, obtaining
stronger conclusion.

\begin{example}\label{example JT predual}{\rm Let $\mathcal{B}$ denote the predual of the James tree space $JT$ (compare \cite[page 175]{fegam}). Then the following statements hold:
\begin{enumerate}[$(a)$]
\item $\mathcal{B}$ satisfies the $(r, s)$-LKK$^*$ property for all $r, s
> 0$ with $r^2 + s^2 \leq 1$;
\item JT has the UKK$^*$ property;
\item $\mathcal{B}$ doesn't satisfy property $M^*(1,s)$ for any $0<s\leq 1$.
\end{enumerate}

To see the first statement, we recall that $JT$ is separable and admits a (boundedly complete) basis
$(e_{n})$ (compare \cite[Definition 3.a.2]{fegam}). We denote by $\pi_{n}$, $n \in \mathbb{N}$, the natural
projections onto the $n$th first coordinates of this basis. By \cite[Lemma
3.a.3]{fegam} the inequality
$$
\|\pi_{n} (x)\|^2 + \|x-\pi_{n}(x)\|^{2}\leq \|x\|^{2},
$$ holds for every $n \in \mathbb{N}$ and $x \in JT$.
So, for every $n \in \mathbb{N}$ and $x^* , y^* \in B_{JT^*}$,
$$
\|r \pi_{n}^* (x^*) + s (y^* - \pi_{n}^* (y^*) )\|^2 \leq
r^2 \|x^* \|^2 + s^2 \|y^*\|^2 \leq r^2 + s^2 \leq 1.
$$ Theorem \ref{proposition 3.1.} assures that $B$ satisfies the $(r, s)${\it
-LKK}$^*$ property.\smallskip

Having in mind that $JT$ does not contain $\ell_1$ (see \cite[Theorem 3.a.8]{fegam}), and hence $B_{JT}$ is weak$^*$ sequentially compact, statement $(b)$ follows from Proposition \ref{Proposition 2.4}.\smallskip

$(c)$ It is well known that $\mathcal{B}$ has the PCP  (so, $\mathcal{B}$ doesn't contain an
isomorphic copy of $c_{0}$) and fails to have the Radon-Nikod{\'y}m property (see \cite[4.b.2,  4.b.5, and 3.c.10]{fegam}). Since $\mathcal{B}$ is non-reflexive and $JT$ does not contain a copy of $\ell_1$, an application of \cite[Proposition 3.1]{CN2000} and \cite[Corollary 3.4$(2)$]{CN1998} gives the desired conclusion.}
\end{example}

We consider next a series of examples inspired in \cite{CNO} and \cite{CN2000}.

\begin{example}\label{example CaNiOj 4.4}{\rm(Compare \cite[Example 4.4]{CNO} and \cite[Proposition 4.1 and Corollary 4.2$(2.)$]{CN2000}) Fix a series $\displaystyle \sum_{n}\alpha_n$ of positive real numbers such that $\alpha_1\neq \alpha_2$ and $\displaystyle  \sum_{n=1}^{\infty} \alpha_n = a \in (0,1)$. Let $X$ be the Banach space $c_0$ equipped with the norm given by $$\|x\| := \sup\left\{|x_n| + \sum_{j=1}^{n} |x_j| \alpha_j : n\in \mathbb{N}\right\}.$$ By \cite[Proposition 4.1 and Corollary 4.2$(2.)$]{CN2000} we know that $X$ satisfies the $M^* (1, 1-a)$ property, and hence the $M (1, 1-a)$ property (see \cite[Proposition 3.1]{CN2000}). The arguments in the latest reference also prove that $X$ satisfies the hypothesis of Theorem \ref{proposition 3.1.}, and hence $X$ satisfies the $(1, 1-a)$-LKK$^*$ property.\smallskip

We claim that $X$ does not satisfy the $(M)$-property. Indeed, let $x= \frac{1}{1+\alpha_1} e_1$ and $y = \frac{1}{1+\alpha_2} e_2$, where $(e_n)$ is the canonical basis of $c_0$. It is not hard to check that $\|x\|=\|y\|=1$, $\|x+e_n\| = 1+\frac{\alpha_1}{1+\alpha_1} + \alpha_n \to 1+\frac{\alpha_1}{1+\alpha_1}$ and $\| y + e_n\| = 1 + \frac{\alpha_2}{1+\alpha_2} + \alpha_n \to 1 + \frac{\alpha_2}{1+\alpha_2} $, which shows that $X$ does not satisfy property $(M)$.}
\end{example}

\begin{example}\label{example CaNiOj 4.5}{\rm(Compare \cite[Proposition 4.1 and Corollary 4.3]{CN2000}) Let $X$ be the Banach space $\mathbb{C}\times c_0$ equipped with the norm defined by $$\|(\alpha,x)\| := \max\left\{|\alpha|+ \lambda \|x\|_{0}, \|x\|_0\right\},$$ where $0<\lambda<1$ is a fixed number.
Corollary 4.3 in \cite{CN2000} and its proof show that $X$ satisfies the $M^* (1-\lambda, 1)$ property, and hence the $M (1-\lambda, 1)$ property (see \cite[Proposition 3.1]{CN2000}).\smallskip

We shall finally prove that $X$ does not satisfy the $M(1,s)$-property for any $0<s\leq 1$. Indeed, let $x=(1, 0)$ and $y = (0,e_1)$, where $(e_n)$ is the canonical basis of $c_0$. Clearly, $\|x\|=\|y\|=1$. We can easily see that $\|x+ s (0,e_n)\| = 1+ \lambda s$ and $\| y + (0,e_n)\| = 1 $, for every $n\geq 2$,  which proves the desired statement.}
\end{example}

\section{Uniform Kadec-Klee property in JB$^*$-triples}\label{sec: UKKP spin}

A \textit{spin factor} is a JB$^*$-triple $V$ whose norm and triple product are given by the following rules. $V$ is a Hilbert space with respect to an Hilbert product $\langle\cdot,\cdot\rangle$, there exists a \textit{conjugation} $\overline{\ \cdot\ }: V\to V$ (i.e. a conjugate linear isometry of period 2) such that
\begin{eqnarray}\label{22}
\{a,b,c\}=\frac{1}{2}(\langle a,b\rangle c+\langle c,b\rangle a-\langle a,\overline{c}\rangle \overline{b}),
\end{eqnarray} and
\begin{equation}\label{eq norm spin}  \| a \| ^2 := \langle a,a\rangle
+ (\langle a,a\rangle^2 - |\langle a,\overline{a}\rangle|^2)^{\frac 12}.
\end{equation}
for all $a,b,c\in V$. Let $\|\cdot\|_2$ denote the Hilbert norm of $V$. Clearly,
\begin{eqnarray}\label{23}
\|a\|_2\leq \|a\|\leq \sqrt{2}\|a\|_2,\quad\forall\,a\in V.
\end{eqnarray}
We note that the spin factor $V$ is not strictly convex.\smallskip

There is an undoubted advantage of regarding spin factors as projective tensor products of certain Hilbert spaces. Given two Banach spaces $X$ and $Y$, the symbol $X\otimes_{\pi} Y$ will denote the projective tensor product of $X$ and $Y$, while $\|.\|_{\pi}$ will stand for the projective norm. It is known that $$\|u\|_{\pi} = \inf\left\{\sum_{i=1}^{m} \|x_i\| \|y_i\| : \sum_{i=1}^{m} x_i\otimes y_i  \right\},$$ for every $u\in X\otimes Y$, and
$\left(X \otimes_\pi Y\right)^{*} = L(X,Y^*)$ (see \cite[\S 2.2]{Ryan}). It follows from this identification that, given a Hilbert space ${H}$, the projective tensor product ${H}\otimes_\pi {H}$ satisfies $({H}\otimes_\pi {H})^*= B({H})$, the space of bounded linear operators on ${H}$.
It follows from the uniqueness of the predual of every von Neumann algebra that ${H}\otimes_\pi {H}=L^1({H})$, the trace class operators on $H$.\smallskip

Spin factors can be represented as real projective tensor products of certain Hilbert spaces. More concretely, by Lemma 3.5 in \cite{BP05}, for every spin factor $V$ there exists a real Hilbert space $K$ such that $V$ is JB$^*$-triple isometrically isomorphic to $\mathbb{C} \otimes_{\pi}^{\mathbb{R}} K$, the real projective tensor product of $K$ and $\mathbb{C}$, when the latter is regarded as a real space. Under this point of view, the UKK property in spin factors can be easily handle.

\begin{theorem}\label{spin01}
Every spin factor satisfies the Uniform Kadec-Klee property.
\end{theorem}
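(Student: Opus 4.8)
The plan is to combine reflexivity of $V$ with Proposition \ref{Proposition 2.4} and Theorem \ref{proposition 3.1.}. First I would note that, by the norm equivalence \eqref{23}, the spin factor $V$ is linearly homeomorphic to a Hilbert space and hence reflexive; consequently the weak and weak$^*$ topologies on $V$ coincide and $B_V$ is weak$^*$ sequentially compact, so that the UKK and UKK$^*$ properties are the same for $V$. It therefore suffices to show that $V$, regarded as the dual of $X:=V^*$, has the UKK$^*$ property. By Proposition \ref{Proposition 2.4} this reduces to checking that $\hbox{LKK}^*(V^*)$ contains every pair $(r,s)$ with $r^2+s^2\le 1$: such pairs already meet $\pi_{1,\varepsilon}^+$ for every $\varepsilon>0$, since on the circle $r^2+s^2=1$ one has $\frac{1-r}{s}=\sqrt{\frac{1-r}{1+r}}\to 0$ as $r\to 1$.

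To obtain the $(r,s)$-LKK$^*$ property for $V^*$ I would apply Theorem \ref{proposition 3.1.} in the form provided by its Remark, so that the verification reduces to the inequality \eqref{eq first ineq}. Using the representation $V=\mathbb{C}\otimes_{\pi}^{\mathbb{R}} K$, fix the net $(P_\lambda)$ of finite rank orthogonal projections on the real Hilbert space $K$, directed by inclusion of their ranges, and set $K_\lambda:=\mathrm{id}_{\mathbb{C}}\otimes P_\lambda$ acting on $V^*$. Each $K_\lambda$ is finite rank, hence compact, and its adjoint on $(V^*)^*=V$ is again $\mathrm{id}_{\mathbb{C}}\otimes P_\lambda$. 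Because $P_\lambda\to I_K$ strongly and the spin norm is equivalent to the Hilbert norm by \eqref{23}, one verifies on elementary tensors and extends by density and uniform boundedness that $\|K_\lambda^*(v)-v\|\to 0$ for every $v\in V$, which is the first hypothesis of the theorem.

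The core of the proof is the verification of \eqref{eq first ineq}. Given $v\in B_V$, write $a:=K_\lambda^*(v)=(\mathrm{id}_{\mathbb{C}}\otimes P_\lambda)(v)$ and $b:=v-a=(\mathrm{id}_{\mathbb{C}}\otimes (I-P_\lambda))(v)$. The conjugation of $V$ acts only on the $\mathbb{C}$-factor, hence commutes with $\mathrm{id}_{\mathbb{C}}\otimes P_\lambda$; together with $P_\lambda K\perp (I-P_\lambda)K$ this yields the two orthogonality relations $\langle a,b\rangle=0$ and $\langle a,\overline{b}\rangle=0$. Substituting these into the explicit norm formula \eqref{eq norm spin}, I would establish the super-additivity estimate $\|a\|^2+\|b\|^2\le \|a+b\|^2=\|v\|^2\le 1$; after squaring, this reduces to the nonnegativity of an expression that can be rewritten as a sum of a perfect square and a product of two manifestly nonnegative factors. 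Granting this, Cauchy--Schwarz gives $r\|a\|+s\|b\|\le \sqrt{r^2+s^2}\,\sqrt{\|a\|^2+\|b\|^2}\le \sqrt{r^2+s^2}\le 1$ whenever $r^2+s^2\le 1$, uniformly in $\lambda$ and $v$, which is exactly \eqref{eq first ineq}.

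I expect the main obstacle to be precisely this super-additivity of the squared spin norm across the conjugation-invariant orthogonal splitting $v=a+b$. In contrast with the Hilbert norm, where the squared norm splits additively, here the extra term $(\langle\cdot,\cdot\rangle^2-|\langle\cdot,\overline{\,\cdot\,}\rangle|^2)^{\frac12}$ in \eqref{eq norm spin} must be controlled, and it is the simultaneous use of both orthogonality relations $\langle a,b\rangle=0$ and $\langle a,\overline{b}\rangle=0$ (the latter being available only because the projections act on the real factor $K$ and therefore commute with the conjugation) that makes the estimate go through. Once \eqref{eq first ineq} is in hand, Theorem \ref{proposition 3.1.} yields $(r,s)\in \hbox{LKK}^*(V^*)$ for all $r^2+s^2\le 1$, and the reduction of the first paragraph, via Proposition \ref{Proposition 2.4} and reflexivity, finishes the proof that $V$ satisfies the UKK property.
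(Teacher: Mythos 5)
Your proposal is correct, and its overall architecture coincides with the paper's: the same representation $V=\mathbb{C}\otimes_{\pi}^{\mathbb{R}}K$ with $V^{*}=B_{\mathbb{R}}(\mathbb{C}_{\mathbb{R}},K)$, the same net of finite-rank operators induced by orthogonal projections onto finite-dimensional subspaces of $K$ (the paper's $K_{F}(T)=p_{F}T$ is exactly your $\mathrm{id}_{\mathbb{C}}\otimes P_{\lambda}$, with the same adjoint $1\otimes x+i\otimes y\mapsto 1\otimes p_{F}(x)+i\otimes p_{F}(y)$), the same combination of Theorem \ref{proposition 3.1.} with Proposition \ref{Proposition 2.4}, and the same closing step via reflexivity; you even make explicit the check that pairs $(r,s)$ with $r^{2}+s^{2}\leq 1$ meet every $\pi_{1,\varepsilon}^{+}$, which the paper leaves implicit. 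The genuine difference is where the key inequality is verified. The paper checks \eqref{eq shrinking in the space} in the \emph{predual}, where it is a one-line Pythagorean identity: since $p_{F}S(\lambda)\perp (Id-p_{F})T(\lambda)$ in $K$, one has $\|rp_{F}S(\lambda)+s(Id-p_{F})T(\lambda)\|^{2}=r^{2}\|p_{F}S(\lambda)\|^{2}+s^{2}\|(Id-p_{F})T(\lambda)\|^{2}\leq r^{2}+s^{2}$. You instead check the dual-side inequality \eqref{eq first ineq} in $V$ itself with the spin norm, correctly invoking Remark \ref{remark shrinking compact approximation inequality in the dual space} as the license to do so; your key lemma is then the superadditivity of the squared spin norm across conjugation-compatible orthogonal splittings, which is a genuinely different (and harder) computation than the paper's. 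That lemma is true and your sketch completes: from $\langle a,b\rangle=\langle a,\overline{b}\rangle=0$ one also gets the second cross term $\langle b,\overline{a}\rangle=0$ (use $\langle\overline{u},\overline{w}\rangle=\langle w,u\rangle$, or compute in components), so writing $\alpha_{1}=\langle a,a\rangle$, $\beta_{1}=\langle a,\overline{a}\rangle$, $\alpha_{2}=\langle b,b\rangle$, $\beta_{2}=\langle b,\overline{b}\rangle$, one has $\langle v,v\rangle=\alpha_{1}+\alpha_{2}$ and $\langle v,\overline{v}\rangle=\beta_{1}+\beta_{2}$, and by \eqref{eq norm spin} the claim $\|a\|^{2}+\|b\|^{2}\leq\|v\|^{2}$ is exactly the superadditivity of $(\alpha,\beta)\mapsto\sqrt{\alpha^{2}-|\beta|^{2}}$ on the cone $\{|\beta|\leq\alpha\}$ (a reverse Minkowski inequality); this follows from $\alpha_{1}\alpha_{2}-\mathrm{Re}\,(\beta_{1}\overline{\beta_{2}})\geq\alpha_{1}\alpha_{2}-|\beta_{1}||\beta_{2}|\geq\sqrt{(\alpha_{1}^{2}-|\beta_{1}|^{2})(\alpha_{2}^{2}-|\beta_{2}|^{2})}$, where the last inequality holds because the difference of the squares of its two sides equals $(\alpha_{1}|\beta_{2}|-\alpha_{2}|\beta_{1}|)^{2}\geq 0$ --- precisely the perfect square you anticipated. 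As for what each route buys: the paper's predual computation is essentially free, because the norm of $B_{\mathbb{R}}(\mathbb{C}_{\mathbb{R}},K)$ is a supremum of Hilbert-space norms; your dual-side argument requires this genuine (though elementary) inequality for the spin norm, but in exchange it is intrinsic to $V$ --- it never uses the explicit form of the predual norm --- and it isolates the geometric reason the estimate holds, namely that splittings orthogonal with respect to both $\langle\cdot,\cdot\rangle$ and the form $\langle\cdot,\overline{\cdot}\rangle$ are square-superadditive for \eqref{eq norm spin}.
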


\begin{proof} Let $V = \mathbb{C} \otimes_{\pi}^{\mathbb{R}} K$ be a spin factor, where $K$ is a real Hilbert space. It is known that $\left(\mathbb{C} \otimes_{\pi}^{\mathbb{R}} K\right)^* \cong B_{_\mathbb{R}} (\mathbb{C}_{_\mathbb{R}}, K)$, where the latter denotes the space of all bounded real linear operators from $\mathbb{C}_{_\mathbb{R}}$ into $K$. Since $V$ is reflexive, we can regard $V$ as a dual Banach space with $V_* = V^* = B_{_\mathbb{R}} (\mathbb{C}_{_\mathbb{R}}, K)$.\smallskip

For each finite dimensional subspace $F\subseteq K$, let $p_{_F}$ denote the orthogonal projection of $K$ onto $F$. We define a finite range operator $$K_{_F}: B_{_\mathbb{R}} (\mathbb{C}_{_\mathbb{R}}, K)\to B_{_\mathbb{R}} (\mathbb{C}_{_\mathbb{R}}, K),$$ given by $K_{_F} (T) = p_{_F} T$. Clearly $\|K_{_F}\|\leq 1$ for every $F$ as above. Let $\mathcal{F} (K)$ denote the set of all finite dimensional subspaces of $K$ ordered by inclusion. If we consider the net $(K_{_F})_{_{F\in \mathcal{F} (K)}}$, it can be easily checked that, for each $T\in B_{_\mathbb{R}} (\mathbb{C}_{_\mathbb{R}}, K),$ the net $(\| K_{_F} (T) -T\|)_{F}$ tends to zero.\smallskip

Fix now $S,T\in B_{_\mathbb{R}} (\mathbb{C}_{_\mathbb{R}}, K)$, $r,s\in (0,1]$, and $\lambda\in \mathbb{C}$ with $|\lambda |\leq 1$. The inequality $$ \| r K_{_F} (S) (\lambda) + s (T-K_{_F} (T)) (\lambda) \|^2 = \| r p_{_F} S (\lambda) + s (Id-p_{_F}) T (\lambda) \|^2$$  $$ = r^2  \| p_{_F} S (\lambda) \|^2 + s^2 \| (Id-p_{_F}) T (\lambda) \|^2 \leq r^2 \|T\|^2 + s^2 \|S\|^2,$$ holds for every $F\in \mathcal{F} (K)$. Therefore we prove that $$\limsup_{F\in \mathcal{F} (K)} \sup_{\|S\|,\|T\|\leq 1} \left\| r K_{_F} (S) + s (T-K_{_F} (T)) \right\|\leq \sqrt{r^2+s^2}\leq 1,$$ for every $r,s\in (0,1]$ with $r^2+s^2\leq 1$.\smallskip

Pick now  $1\otimes x + i \otimes y$ in $\mathbb{C} \otimes_{\pi}^{\mathbb{R}} K$ and $T\in B_{_\mathbb{R}} (\mathbb{C}_{_\mathbb{R}}, K)$. It can be easily seen that $$ K_{_F}^* (1\otimes x + i \otimes y) (T) = (1\otimes x + i \otimes y) K_{_F} (T) = (1\otimes x + i \otimes y)  (p_{_F} T) $$ $$= \langle x, p_{_F} T(1)\rangle +  \langle y, p_{_F} T(i)\rangle =
 \langle p_{_F}(x) ,  T(1)\rangle + \langle p_{_F}(y) ,  T(i)\rangle $$ $$= (1\otimes p_{_F}(x) + i \otimes p_{_F}(y)) (T),$$ and hence $K_{_F}^* (1\otimes x + i \otimes y) = 1\otimes p_{_F}(x) + i \otimes p_{_F}(y),$ for every $1\otimes x + i \otimes y\in \mathbb{C} \otimes_{\pi}^{\mathbb{R}} K.$ Consequently, $$ \left\| K_{_F}^* (1\otimes x + i \otimes y) - (1\otimes x + i \otimes y) \right\|_{\pi} \leq \|p_{_F} (x)-x\|_{K} \ \|p_{_F} (y)-y\|_{K},$$ and hence $\displaystyle \lim_{F\in \mathcal{F} (K)} \left\| K_{_F}^* (1\otimes x + i \otimes y) - (1\otimes x + i \otimes y) \right\|_{\pi}=0$.\smallskip

Applying Theorem \ref{proposition 3.1.} with $X= B_{_\mathbb{R}} (\mathbb{C}_{_\mathbb{R}}, K)$ and the net $(K_{F})_{F\in \mathcal{F} (K)}$, we deduce that $X$ satisfies the $(r,s))$-LKK$^*$ property for every $r,s\in (0,1]$ with $r^2 +s^2\leq 1$. Proposition \ref{Proposition 2.4} proves that $X^* = V$ satisfies the UKK$^*$ property. The proof concludes by observing that for a reflexive space the UKK property and the UKK$^*$ property are equivalent.
\end{proof}

\begin{corollary}\label{c UKK equals KKP}
The Kadec-Klee property and the uniform Kadec-Klee property are equivalent for JB$^*$-triples.
\end{corollary}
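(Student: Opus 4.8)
The plan is to reduce the statement to the classification of JB$^*$-triples with the Kadec-Klee property together with the result on spin factors already established above. First I would dispose of the easy implication, which in fact holds for an arbitrary Banach space: the UKK property always implies the KKP. Indeed, if $(x_{n})$ is a separated, weakly convergent sequence in $B_{X}$ with weak limit $x$, then setting $\varepsilon := \mathrm{sep}(x_{n}) > 0$ and applying the UKK property furnishes a $\delta > 0$ with $\|x\| < 1 - \delta < 1$, which is exactly the condition characterizing the KKP recalled in Section \ref{sec: def}. Thus only the reverse implication uses the JB$^*$-triple structure.

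For the converse I would invoke \cite[Proposition 2.13]{BP04}, which asserts that a JB$^*$-triple $E$ enjoys the KKP if and only if it is finite-dimensional, a (complex) Hilbert space, or a spin factor. The strategy is then to verify the UKK property in each of these three cases. If $E$ is finite-dimensional, the relative weak and norm topologies on $E$ coincide, so a sequence with $\mathrm{sep}(x_{n}) \geq \varepsilon > 0$ cannot be weakly (equivalently, norm) convergent; hence the hypothesis in the definition of the UKK property is vacuous and the property holds trivially. If $E$ is a Hilbert space, then it is uniformly convex, and, as observed right after the definition of the UKK property in Section \ref{sec: def}, uniformly convex spaces satisfy the UKK property. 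Finally, if $E$ is a spin factor, then Theorem \ref{spin01} supplies the UKK property directly.

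Combining the three cases shows that the KKP implies the UKK property for every JB$^*$-triple, and together with the general observation of the first paragraph this yields the claimed equivalence. I do not anticipate any genuine obstacle here: the entire weight of the argument rests on Theorem \ref{spin01}, the spin factor case, which has already been proved, while the remaining ingredients are either a routine general Banach space remark or an immediate appeal to the classification theorem of \cite{BP04}. The only point deserving mild care is confirming that the finite-dimensional case is handled by the vacuous fulfilment of the UKK hypothesis rather than by uniform convexity, since finite-dimensional JB$^*$-triples need not be uniformly convex.
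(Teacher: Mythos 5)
Your proof is correct and follows essentially the same route as the paper: both reduce the statement to the classification in \cite[Proposition 2.13]{BP04} and then treat the three cases (finite-dimensional, Hilbert space, spin factor), with the whole weight carried by Theorem \ref{spin01}. The only cosmetic differences are that you spell out the easy implication (UKK $\Rightarrow$ KKP) explicitly and settle the finite-dimensional case by observing that the UKK hypothesis is vacuous there, whereas the paper instead cites near uniform convexity of finite-dimensional spaces \cite[p.744]{Hu}; both arguments are valid.
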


\begin{proof} Let $E$ be a JB$^*$-triple.  Suppose that $E$  has the Kadec-Klee property, by \cite[Proposition 2.13]{BP04}, we can derive that $E$ is finite-dimensional or a spin factor or a Hilbert space. It is known that a Hilbert space satisfies the uniform Kadec-Klee property. If $E$ is a finite dimensional JB$^*$-triple,
then it is nearly uniform convex, and hence $E$ satisfies the uniform Kadec-Klee property (see, for example, \cite[p.744]{Hu}). Finally, if $E$ is a spin factor, Theorem \ref{spin01} proves that $E$ has the uniform Kadec-Klee property.
\end{proof}

We can actually show that similar techniques to those employed above can be also applied to give and alternative proof of a result due to C. Lennard \cite{Le1}. As in the proof given by Lennard in the just quoted paper, we rely on previous results of J. Arazy \cite{Ar}. We have already commented that the projective tensor product, $L^1 (H) =H\otimes_{\pi} H,$ of a Hilbert space $H$ with itself coincides with the predual of $B(H)$. In this setting $L^1 (H) =H\otimes_{\pi} H$ also is the dual of the space $K(H)$ of compact operators on $H$.

\begin{theorem}\label{t Lennard trace class}\cite[Theorem 2.4]{Le1}
For every Hilbert space $H$, the space $L^1 (H)= H \otimes_{\pi} H$ of all trace class operators on $H$ satisfies the UKK$^*$ property.
\end{theorem}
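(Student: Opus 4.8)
The plan is to mimic exactly the strategy used in the proof of Theorem \ref{spin01}, replacing the real projective tensor product $\mathbb{C} \otimes_\pi^{\mathbb{R}} K$ by the complex projective tensor product $H \otimes_\pi H = L^1(H)$, which is the predual of $B(H)$. The key structural fact is that $L^1(H)$ is the dual of the space $K(H)$ of compact operators, so we apply our general machinery to $X = K(H)$ and conclude that $X^* = L^1(H)$ enjoys the UKK$^*$ property. Concretely, I would construct, for each finite-dimensional subspace $F \subseteq H$, the orthogonal projection $p_F$ of $H$ onto $F$, and define the finite-rank operator $K_F : K(H) \to K(H)$ by $K_F(T) = p_F \, T$ (or perhaps the symmetrized variant $p_F T p_F$, to be chosen below). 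These satisfy $\|K_F\| \leq 1$, and ordering $\mathcal{F}(H)$ by inclusion yields a net $(K_F)$ with $\lim_F \|K_F(T) - T\| = 0$ for every compact $T$, since compact operators are precisely those approximable in norm by finite-rank operators.

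The heart of the argument is verifying the key inequality \eqref{eq shrinking in the space} of Theorem \ref{proposition 3.1.}, namely $\limsup_F \sup_{\|S\|, \|T\| \leq 1} \|r K_F(S) + s(T - K_F(T))\| \leq 1$ for appropriate $r, s$. Here the relevant estimate is the one that Arazy \cite{Ar} established for the Schatten classes: I would invoke the orthogonality that the range-projection $p_F$ and the complementary projection $Id - p_F$ provide, giving a Pythagorean-type inequality for the trace norm (or for the operator norm on $K(H)$, via duality). The plan is to show $\|r p_F S + s(Id - p_F)T\|^2 \leq r^2 \|S\|^2 + s^2 \|T\|^2 \leq r^2 + s^2$ whenever $r^2 + s^2 \leq 1$, exactly paralleling the spin-factor computation; this is where Arazy's lemma on the geometry of $C_p$-norms under orthogonal projections is the essential input. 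After that, I would compute the adjoint $K_F^* : L^1(H) \to L^1(H)$, confirming it acts as $K_F^*(\omega) = p_F \, \omega$ (equivalently on the tensor $x \otimes y \mapsto p_F(x) \otimes y$, or its symmetrized form), and verify $\lim_F \|K_F^*(\omega) - \omega\|_\pi = 0$ for each $\omega \in L^1(H)$, using density of finite-rank operators in the trace class.

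With these two hypotheses of Theorem \ref{proposition 3.1.} in place, that theorem yields that $X = K(H)$ satisfies the $(r,s)$-LKK$^*$ property for every $r, s \in (0,1]$ with $r^2 + s^2 \leq 1$. In particular $\mathrm{LKK}^*(X) \cap \pi_{1,\varepsilon}^+ \neq \emptyset$ for all $\varepsilon > 0$, since one can take $r \to 1^-$ with $s = \sqrt{1 - r^2}$ and check that $\frac{1-r}{s} = \sqrt{\frac{1-r}{1+r}} \to 0$. Proposition \ref{Proposition 2.4} then immediately delivers that $X^* = L^1(H)$ has the UKK$^*$ property, which is the desired conclusion.

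The main obstacle I anticipate is the orthogonality estimate in the second paragraph. In the spin-factor case the computation reduced to a genuine Pythagorean identity because the target was a Hilbert space $K$, so $\|r p_F S(\lambda) + s(Id - p_F)T(\lambda)\|^2$ split exactly. For the trace class the analogous splitting under left-multiplication by $p_F$ and $Id - p_F$ is not an equality of Hilbert norms but rather a Clarkson/Arazy-type inequality for the $C_1$ (or $C_p$) norm, and care is needed to extract the clean bound $r^2 + s^2$. I would therefore lean on the precise inequality from \cite{Ar} governing how the trace norm decomposes across complementary orthogonal projections, and it is the correct citation and application of that inequality — rather than any step in our own framework — that constitutes the genuine work here; everything else is a transcription of the spin-factor proof with $H \otimes_\pi H$ in place of $\mathbb{C} \otimes_\pi^{\mathbb{R}} K$.
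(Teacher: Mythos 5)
Your outline has the same skeleton as the paper's proof (approximants built from the projections $p_F$, Theorem \ref{proposition 3.1.} applied to $X=K(H)$, then Proposition \ref{Proposition 2.4}), and the adjoint computation and the convergence $\lim_F\|K_F^*(u)-u\|_\pi=0$ are fine. But the two points you defer are exactly where the proof lives, and your hedge between $K_F(T)=p_FT$ and $K_F(T)=p_FTp_F$ conceals a genuine dilemma: each choice destroys one of the two hypotheses of Theorem \ref{proposition 3.1.}. If $K_F(T)=p_FT$, your Pythagorean computation is indeed valid for the operator norm (the ranges of $p_F$ and $Id-p_F$ are orthogonal, so $\|rp_FS+s(Id-p_F)T\|^2\le r^2\|S\|^2+s^2\|T\|^2$), but $T\mapsto p_FT$ is \emph{not} a compact operator on $K(H)$, and compactness is used essentially in the proof of Theorem \ref{proposition 3.1.}, namely in the step $\lim_m\|K_j^*(x_m^*)\|=0$ for weak$^*$ null sequences. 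Concretely, writing $\xi\otimes\eta$ for the rank-one operator $\zeta\mapsto\langle\zeta,\eta\rangle\xi$, one has $K_F^*(u)=u\,p_F$; if $e_1\in F$ and $(e_m)$ is orthonormal, then $u_m=e_m\otimes e_1$ is weak$^*$ null in $L^1(H)$ (since $\mathrm{tr}(u_mT)=\langle Te_m,e_1\rangle\to0$ for every compact $T$), yet $K_F^*(u_m)=u_mp_F=u_m$ has $\|u_m\|_\pi=1$ for all $m$; so this branch collapses. If instead $K_F(T)=p_FTp_F$ (the paper's choice, genuinely finite rank), then your ``clean bound'' $r^2+s^2$ is false in both norms, because $T-p_FTp_F$ retains \emph{three} corners, not one: taking $e_1\in F$, $e_2\perp F$, $S=e_1\otimes e_1+e_2\otimes e_2$, $T=e_1\otimes e_2+e_2\otimes e_1$, the operator $rK_F(S)+s(T-K_F(T))$ acts on $\mathrm{span}\{e_1,e_2\}$ as $\left(\begin{smallmatrix} r&s\\ s&0\end{smallmatrix}\right)$, whose norm $\tfrac12\bigl(r+\sqrt{r^2+4s^2}\bigr)$ equals $\tfrac{1+\sqrt{5}}{2\sqrt{2}}>1$ at $r=s=1/\sqrt{2}$; dually, $u=\tfrac12(e_1+e_2)\otimes(e_1+e_2)$ has $\|u\|_\pi=1$, $\|p_Fup_F\|_\pi=\tfrac12$, $\|u-p_Fup_F\|_\pi=\tfrac{\sqrt{5}}{2}$, so $r\|p_Fup_F\|_\pi+s\|u-p_Fup_F\|_\pi>\sqrt{r^2+s^2}$ for the same $r,s$. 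There is no Pythagorean splitting to be extracted here.

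The repair (and this is what the paper actually does) is to keep the compact choice $p_FTp_F$, abandon \eqref{eq shrinking in the space} (which, by the counterexample above, genuinely fails), and verify the dual-side inequality \eqref{eq first ineq} via Remark \ref{remark shrinking compact approximation inequality in the dual space}, using as key input Arazy's four-corner inequality \cite[page 48 $(iii)$]{Ar}:
$$
\|p_Fup_F\|_\pi^2+\|p_Fu(1-p_F)\|_\pi^2+\|(1-p_F)up_F\|_\pi^2+\|(1-p_F)u(1-p_F)\|_\pi^2\le\|u\|_\pi^2 .
$$
By the triangle inequality and Cauchy--Schwarz on the three off-corner terms, for $\|u\|_\pi\le1$ and $t=\|p_Fup_F\|_\pi$ one gets
$$
r\|p_Fup_F\|_\pi+s\|u-p_Fup_F\|_\pi\le rt+\sqrt{3}\,s\sqrt{1-t^2}\le\sqrt{r^2+3s^2},
$$
so Theorem \ref{proposition 3.1.} gives the $(r,s)$-LKK$^*$ property of $K(H)$ only on the region $r^2+3s^2\le1$, not $r^2+s^2\le1$. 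Your final step must then be redone on this smaller region; it survives, since along $r=\sqrt{1-3s^2}$ one has $\frac{1-r}{s}=\frac{3s}{1+\sqrt{1-3s^2}}\to0$ as $s\to0$, and then Proposition \ref{Proposition 2.4} applies. Be aware that this is precisely where the constants bite and no step is ``just a transcription'': the paper's own write-up records only the cruder region $r+\sqrt{6}\,s\le1$, on which $\frac{1-r}{s}\ge\sqrt{6}$, so none of those pairs lies in $\pi_{1,\varepsilon}^+$ for the relevant values of $\varepsilon$ and Proposition \ref{Proposition 2.4} could not be invoked as stated; the maximization over $t$ above is what makes the final line of the argument legitimate. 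In summary, your proposal identifies the right architecture but its central inequality is false as stated, the choice of approximants is not a matter of taste but of the proof working at all, and the loss of constants forced by Arazy's inequality propagates into the last step, which must be checked anew.
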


\begin{proof} Keeping in mind the notation in the proof of Theorem \ref{spin01}, For each $F\in \mathcal{F} (H)$, let $K_{_F}: K (H)\to K(H),$ denote the finite rank (contractive) projection given by $K_{_F} (T) = p_{_F} T p_{_F}$, where $p_{F}$ denotes the orthogonal projection of $H$ onto $F$, and by an abuse of notation we regard $T p_{_F}$ as mapping from $F$ into $H$. The net $(K_{_F})_{_{F\in \mathcal{F} (H)}}$ satisfies that, for each $T\in K(H)$, the net $(\| K_{_F} (T) -T\|)_{F}$ converges to zero.\smallskip

Let us take an element $u\in L^1 (H) =H\otimes_{\pi} H,$ with $\|u\|_{\pi}\leq 1$, $r,s\in (0,1]$, and $F\in \mathcal{F} (H)$. By \cite[page 48 $(iii)$]{Ar} (or \cite[Proposition 2.2]{Le1} or \cite[Lemma 3.1]{BP05}) it follows that $$r \left\| r K_{_F}^* (u)\right\|_{\pi}  + s \left\| (u-K_{_F}^* (u))  \right\|_{\pi} = r \left\| p_{_F} u p_{_F} \right\|_{\pi}+ s \left\|(u-p_{_F} u p_{_F}) \right\|_{\pi}  $$ $$\leq r \left\| p_{_F} u p_{_F} \right\|_{\pi}+ s \left(\left\|p_{_F} u (1-p_{_F}) \right\|_{\pi} + \left\|(1-p_{_F}) u p_{_F} \right\|_{\pi} + \left\|(1-p_{_F}) u (1-p_{_F}) \right\|_{\pi}\right)$$ $$\leq r + s \sqrt{3} \left(\|u\|_{\pi}^2 -\left\| p_{_F} u p_{_F} \right\|_{\pi}^2 \right)^{\frac12} \leq r +\sqrt{6} s.$$

On the other hand, it can be easily checked that for each $\xi,\eta\in H$, we have $K_{_F}^* (\xi\otimes \eta) = p_{_F}(\xi) \otimes p_{_F}(\eta)$, and hence $$ \left\| K_{_F}^* (\xi \otimes \eta) - (\xi \otimes \eta) \right\|_{\pi} \leq \|p_{_F} (\xi)-\xi\|_{H} \ \|p_{_F} (\eta)-\eta\|_{H},$$ which can be apply to prove that $\displaystyle \lim_{F\in \mathcal{F} (H)} \left\| K_{_F}^* (u) - (u) \right\|_{\pi} =0$, for each $u$ in $H\otimes_{\pi} H$.\smallskip

We deduce by Theorem \ref{proposition 3.1.} (see also Remark \ref{remark shrinking compact approximation inequality in the dual space}) that $K(H)$ satisfies the $(r,s)$-LKK$^*$-property for every $r,s\in (0,1]$ with $r +\sqrt{6} s\leq 1$. Since the unit ball of $L^1 (H) =H\otimes_{\pi} H$ is weak$^*$ sequentially compact (see \cite[Lemma 2.3]{Le1}), we are in position to apply Proposition \ref{Proposition 2.4} to assure that $L^1 (H) =H\otimes_{\pi} H$ satisfies the UKK$^*$.
\end{proof}

Let us briefly recall that a \emph{real JB$^*$-triple} is a closed real subspace $A$ of a JB$^*$-triple $E$ which is closed for the triple product of $E$ (see \cite{IsKaRo}). There is an equivalent definition asserting that every real JB$^*$-triple is precisely the real subspace of all fixed points of a conjugation on a complex JB$^*$-triple. More concretely, a conjugation on a complex Banach space $X$ is a conjugate linear isometry of order 2 $\tau : X\to X$. The associated \emph{real form} of $X$ is the set of all $\tau$-fixed points in $X$
$X^{\tau} := \{ x\in X : \tau (x) =x\}.$  Let us observe that $X^{\tau}$ is the image of the real contractive projection $\frac{1}{2} (id +
\tau)$, and that \begin{equation}\label{i} X = X^{\tau} \oplus i
X^{\tau}.
\end{equation} The Kaup-Banach-Stone theorem asserts that every linear (or conjugate linear) surjective isometry on a complex JB$^*$-triple $E$ is a triple isomorphism (see \cite[Proposition 5.5]{Ka83}). In particular, for every conjugation $\tau$ on $E$, $E^{\tau}$ is a real JB$^*$-subtriple of $E$. It is established in \cite[Proposition 2.2]{IsKaRo} that every real JB$^*$-triple is of the form $E^{\tau}$, where $E$ is a complex JB$^*$-triple and $\tau$ is a conjugation on $E$.\smallskip

It is clear that if $X$ is a complex Banach space satisfying the KKP or the UKK, then $X^{\tau}$ satisfies the same property for every conjugation $\tau$ on $X$. The example given in \cite[Example 1.3]{BunPe05} provides a complex Banach space $X$ and a conjugation $\tau$ on $X$ such that $X^{\tau}$ is an infinite dimensional real Hilbert space, and hence satisfies the KKP and the UKK property, while $X$ does not satisfy the KKP.\smallskip

Having in mind the obstacle raised by the previous example, the study of real JB$^*$-triples satisfying the UKK requires of an appropriate strategy and does not follow from Theorem \ref{spin01} nor Corollary \ref{c UKK equals KKP}.

\begin{proposition}\label{p UKK equals KKP for real}
The Kadec-Klee property and the uniform Kadec-Klee property are equivalent for real JB$^*$-triples.
\end{proposition}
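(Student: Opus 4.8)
The plan is to follow the pattern of the complex case treated in Corollary \ref{c UKK equals KKP}, substituting the complex structure theorem from \cite[Proposition 2.13]{BP04} by its real analogue. Since the UKK property is formally stronger than the KKP, one of the two implications is immediate, and the whole burden of the argument falls on showing that the KKP implies the UKK property.

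First I would appeal to \cite[Proposition 3.13]{BP05}, which classifies the real JB$^*$-triples satisfying the KKP: such a triple must be finite-dimensional, a real or complex Hilbert space, or a real form of a spin factor. The task then reduces to verifying, case by case, that every triple on this list enjoys the UKK property.

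The first two families are handled exactly as in the complex setting. A finite-dimensional real JB$^*$-triple is nearly uniformly convex, whence it satisfies the UKK property (see \cite[p.744]{Hu}); and real or complex Hilbert spaces, being uniformly convex, satisfy the UKK property as well.

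The remaining, and genuinely delicate, case is that of a real spin factor, and this is where the main obstacle lies: as the example recalled immediately before the statement illustrates, the UKK property does not in general descend from a complex JB$^*$-triple to an arbitrary real form, so Theorem \ref{spin01} cannot be transferred to the real setting without care. To circumvent this, I would realize the real spin factor as the real form $V^{\tau}$ of a complex spin factor $V$ and combine two observations: on the one hand, Theorem \ref{spin01} guarantees that $V$ satisfies the UKK property; on the other hand, by the remark preceding the statement, the UKK property does pass from any complex Banach space to each of its real forms. Applying the latter with $X = V$ shows that $V^{\tau}$ inherits the UKK property. The same conclusion covers, a fortiori, a complex spin factor regarded as a real JB$^*$-triple, since the weak topology is insensitive to the choice of real or complex scalars. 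Having established the UKK property in every case of the classification, we conclude that the KKP implies the UKK property, which completes the argument.
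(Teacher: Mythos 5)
Your proof is correct, but for the crucial spin-factor case it takes a genuinely different route from the paper. The paper does not descend from the complex case at all: it invokes Kaup's structure theorem \cite[Theorem 4.1$(viii)$]{Ka97} to write the real spin factor as an $\ell_1$-sum $A = H_1\oplus^{1} H_2$ of real Hilbert spaces, so that $A_* = H_1\oplus^{\ell_\infty}H_2$, builds the net of finite-rank contractive projections $K_{(F_1,F_2)}(x_1,x_2)=(p_{F_1}(x_1),p_{F_2}(x_2))$ on the predual, checks that $\left\| r K_{(F_1,F_2)}(x)+s\left(y-K_{(F_1,F_2)}(y)\right)\right\|\leq\sqrt{r^2+s^2}$ on the unit ball, and then applies Theorem \ref{proposition 3.1.} and Proposition \ref{Proposition 2.4} --- i.e., it reproduces the scheme of Theorem \ref{spin01} on the real side. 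Your shortcut --- realize $A$ as a real form $V^\tau$ of a complex spin factor $V$, apply Theorem \ref{spin01} to $V$, and pass down using the fact that the UKK property is inherited by real forms --- is legitimate: a real form is a closed real subspace whose weak topology is, by Hahn--Banach, the restriction of the weak topology of $V$, and the paper itself records this inheritance as ``clear'' in the paragraph preceding the statement. Your argument is shorter and shows that, once the real classification \cite[Proposition 3.13]{BP05} is available, the complex theorem suffices; the paper's argument is self-contained at the Banach-space level, does not lean on identifying real spin factors with real forms nor on the inheritance remark, and yields explicit quantitative information (the predual satisfies the $(r,s)$-LKK$^*$ property whenever $r^2+s^2\leq 1$).

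Two points in your write-up need repair, though neither breaks the logical chain. First, you misstate the obstacle: the example from \cite[Example 1.3]{BunPe05} shows that the KKP/UKK need not \emph{ascend} from a real form $X^\tau$ to $X$ (there $X^\tau$ is an infinite-dimensional real Hilbert space while $X$ fails the KKP); descent from $X$ to $X^\tau$ always works and is exactly what your proof uses, so your sentence ``the UKK property does not in general descend to an arbitrary real form'' contradicts the remark you then correctly apply. The genuine obstacle the paper alludes to is that one cannot start from a real triple with the KKP, pass to an ambient complex triple, and invoke Corollary \ref{c UKK equals KKP}, since the KKP may be lost upstairs; this is why the real classification is indispensable, and your proof does use it. Second, the classification in \cite[Proposition 3.13]{BP05} also lists quaternionic Hilbert spaces and complex spin factors regarded as real triples; you treat the latter, but the quaternionic case is omitted --- it is covered because the norm of a quaternionic Hilbert space is a real Hilbert-space norm, hence uniformly convex.
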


\begin{proof} Let $A$ be a real JB$^*$-triple satisfying the KKP. Proposition 3.13 in \cite{BP05} assures that $A$ is finite dimensional or a real, complex, or quaternionic Hilbert space or a real or complex spin factor. Applying the previous results, we can assume that $A$ is a real spin factor (i.e. a real form of a complex spin factor). By \cite[Theorem 4.1$(viii)$]{Ka97}, there exits a real Hilbert space $H$, and closed linear subspaces $H_{1}$ and $H_{2}$ of $H$ such that $H_2={H_1}^{\perp}$, and $A$ is the $\ell_1$-sum $A= H_1 \oplus^{1} H_{2}$.\smallskip

Since $A_* = H_1 \oplus^{\ell_{\infty}} H_{2}$. We consider the set $\mathcal{F} (H_1)\times \mathcal{F} (H_2)$ with the order given by inclusion (i.e. $(F_1,F_2)\leq (G_1,G_2)$ if and only if $F_i\subseteq G_i$). For each $(F_1,F_2)\in \mathcal{F} (H_1)\times \mathcal{F} (H_2)$ we define a mapping $K_{_{(F_1,F_2)}} : H_1 \oplus^{\ell_{\infty}} H_{2} \to H_1 \oplus^{\ell_{\infty}} H_{2}$, defined by $K_{_{(F_1,F_2)}} (x_1,x_2) = (p_{_{F_1}} (x_1), p_{_{F_2}} (x_2)) $, where $p_{_{F_i}}$ is the orthogonal projection of $H_i$ onto $F_i$. Clearly, $K_{_{(F_1,F_2)}}$ is a finite rank projection on $H_1 \oplus^{\ell_{\infty}} H_{2}$ with $\| K_{_{(F_1,F_2)}}\| \leq 1$, and for each $(x_1,x_2)\in H_1 \oplus^{\ell_{\infty}} H_{2}$ we have $\displaystyle \lim_{_{(F_1,F_2)}} \| K_{_{(F_1,F_2)}} (x_1,x_2) - (x_1,x_2)  \| =0$. It is also clear that for each $(x_1,x_2)\in H_1 \oplus^{1} H_{2}$  we have $\displaystyle \lim_{_{(F_1,F_2)}} \| K_{_{(F_1,F_2)}}^* (x_1,x_2) - (x_1,x_2)  \| =0$.\smallskip

It is not hard to see that for each $(x_1,x_2),(y_1,y_2)$ in the closed unit ball of $A_*$, and $r,s\in (0,1]$, we have $$\left\| r  K_{_{(F_1,F_2)}} (x_1,x_2) + s ( (y_1,y_2) - K_{_{(F_1,F_2)}} (y_1,y_2))  \right\|$$ $$ = \max\! \left\{\! \| r p_{_{F_1}} (x_1) + s (y_1- p_{_{F_1}} (y_1))\|_{_{H_1}}, \| r p_{_{F_2}} (x_2) + s (y_2- p_{_{F_2}} (y_2))\|_{_{H_2}} \right\} \leq \sqrt{r^2+s^2}.$$

Finally, combining Theorem \ref{proposition 3.1.} and Proposition \ref{Proposition 2.4} above we obtain that $A$ satisfies the UKK property.
\end{proof}

We have already commented that, F. Rambla and J. Becerra proved in \cite{BeRam09} that when $X$ is a real or complex JB$^*$-triple or the predual of a real or complex JBW$^*$-triple, then following are equivalent:\begin{enumerate}[$(a)$]\item $X$ satisfies the fixed point property;
\item $X$ has normal structure;
\item $X$ is reflexive.
\end{enumerate}
It is further known that when $X$ is a real or complex JB$^*$-triple, then $X$ has normal structure if and only if $X$ has weak normal structure. It was known that the class of real or complex JB$^*$-triples satisfying the KK property is strictly smaller than the class of reflexive real or complex JB$^*$-triples (compare \cite[Proposition 2.13]{BP04}). We can conclude now that the class of JB$^*$-triples satisfying the UKK property is precisely the class determined in \cite[Proposition 2.13]{BP04} and \cite[Proposition 3.13]{BP05}.


\begin{thebibliography}{99}
\bibitem{AP} M.D. Acosta, A. M. Peralta, An alternative Dunford-Pettis property for JB$^*$-triples, Quart. J. Math., \textbf{52}, 391-401 (2001).

\bibitem{Ar} J. Arazy, More on convergence in unitary matrix spaces, \emph{Proc. Amer. Math. Soc.}, \textbf{83}, 44-48 (1981).

\bibitem{BP05} J. Becerra Guerrero, A. M. Peralta, The Dunford-Pettis and the Kadec-Klee properties on tensor products of JB$^*$-triples, \emph{Math. Z.}, \textbf{251}, 117-130 (2005).

\bibitem{BeRam09} J. Becerra Guerrero, F. Rambla-Barreno, The fixed point property in JB$^*$-triples and preduals of JBW$^*$-triples, \emph{J. Math. Anal. Appl.} \textbf{360}, no. 1, 254-264 (2009).


\bibitem{BDDL94} M. Besbes, S.J. Dilworth, P.N. Dowling, C.J. Lennard, New convexity and fixed point properties in Hardy and Lebesgure-Bochner spaces,\emph{ J. Funct. Anal.}, \textbf{119}, 340-357 (1994).



\bibitem{BP04} L.J. Bunce, A.M. Peralta, {On weak sequential convergence in JB*-triple duals}, \emph{Studia Math.}, \textbf{160}(2), 117-127 (2004).

\bibitem{BunPe05} L.J. Bunce, A.M. Peralta, The alternative Dunford-Pettis property, conjugations and real forms of C$^*$-algebras, \emph{J. London Math. Soc.} (2) \textbf{71}, no. 1, 161-171 (2005).

\bibitem{BunPe07} L.J. Bunce, A.M. Peralta, Dunford-Pettis properties, Hilbert spaces and projective tensor products, \emph{J. Funct. Anal.} \textbf{253}, no. 2, 692-710 (2007).

\bibitem{CN1998} J.C. Cabello, E. Nieto, On Properties of
$M$-ideals, {\em Rocky Mountain J. Math.} {\bf 28}, 61-93 (1998).

\bibitem{CN2000} J.C. Cabello, E. Nieto, On $M$-type structures and the
fixed point property, {\em Houston J. Math.} {\bf 26},
549-560 (2000).

\bibitem{CNO} J.C. Cabello, E. Nieto, E. Oja, On ideals of compact operators satisfying the $M(r, s)$-inequality, \emph{J. Math. Anal. Appl.} \textbf{220}, 334-348 (1998).


\bibitem{CDLT91} N.L. Carothers, S.J. Dilworth, C.J. Lennard, D.A. Trautman, {A fixed point property for the Lorentz space $L_{p,1}(\mu)$}, \emph{Indiana Univ. Math. J.}, \textbf{40}, 345-352 (1991).




\bibitem{Die} J. Diestel, \emph{Sequences and series in Banach spaces},
Graduate Texts in Mathematics, 92. Springer-Verlag, New York,
1984.

\bibitem{DH94} S.J. Dilworth, Y.-P. Hsu, {The uniform Kadec-Klee property for the Lorentz spaces $L_{\omega, 1}$}, \emph{J. Aust. Math. Soc.}, \textbf{60}, no. 1, 7-17 (1996).

\bibitem{DomGarFalJap1998} T. Dom{\'i}nguez-Benavides, J. Garc{\'i}a Falset, M.A. Jap{\'o}n, The $\tau$-fixed point property for nonexpansive mappings, \emph{Abstr. Appl. Anal.} \textbf{3}, No.3-4, 343-362 (1998).

\bibitem{Dow} P.N. Dowling, Asymptotically isometric copies of $c_{0}$ and renormings of Banach spaces, \emph{J. Math. Anal. Appl.} \textbf{228}, 265-271 (1998).

\bibitem{DS} D. van Dulst, B. Sims, Fixed points of nonexpansive mappings and Chebyshev centers in Banach spaces with norms of type (KK), in \emph{Banach Space Theory and its Applications} (Proc. Bucharest 1981), Lecture Notes in Math., vol. 991, Springer-Verlag, Berlin, Heidelberg and New York, 1983.

\bibitem{EdgarWheeler84} G.A. Edgar, R.F. Wheeler, Topological properties of Banach spaces, \emph{Pacific J. Math.} 115, 317-350 (1984).

\bibitem{fegam} H. Fetter, B. Gamboa, {\em The James Forest}. London Mathematical Society, Lecture Note Series 236. Cambridge University Press, Cambridge 1997.


\bibitem{GarFalSim} J. Garc\'{\i}a-Falset, B. Sims, Property $(M)$ and the weak fixed point property, {\em Proc. Amer. Math. Soc.} {\bf 125}, 2891-2896 (1997).

\bibitem{GhossGodMauSchacher} N. Ghoussoub, G. Godefroy, B. Maurey, W. Schachermayer, {Some topological and geometrical structures in Banach spaces}, \emph{Mem. Amer. Math. Soc.} \textbf{70}, no. 378 (1987).

\bibitem{GoKalLan2000} G. Godefroy, N. Kalton, G. Lancien, Subspaces of $c_{0}(\mathbb{N})$ and Lipschitz isomorphisms, {\em Geom. Funct. Anal.} {\bf 10}, 798-820 (2000).


\bibitem{HarWerWer} P. Harmand, D. Werner, W. Werner, \emph{$M$-ideals in Banach Spaces and Banach Algebras}. Lecture Notes in Math. 1547, Springer, Berlin, 1993.

\bibitem{Hu} R. Huff, {Banach spaces which are nearly uniformly convex}, \emph{Rocky Mountain J. Math.}, \textbf{10}, 743-749 (1980).

\bibitem{IsKaRo} J.M. Isidro, W. Kaup, A. Rodr\'{\i}guez, On
real forms of JB$^*$-triples, \emph{Manuscripta Math.} \textbf{86}, 311-335
(1995).

\bibitem{JiMel1992} A. Jim\'{e}nez-Melado, Stability of weak normal structure
in James quasi reflexive space, \emph{Bull. Austral. Math. Soc.}
{\bf 46}, 367-372 (1992).

\bibitem{Ka83} W. Kaup, A Riemann mapping theorem for bounded symmetric domains in complex Banach spaces, \emph{Math. Z.} \textbf{183}, 503-529 (1983).

\bibitem{Ka97} W. Kaup, On real Cartan factors, \emph{Manuscripta
Math.} \textbf{92}, 191-222 (1997).


\bibitem{Ki2} W.A. Kirk,  {An abstract fixed point theorem for nonexpansive mappings}, \emph{Proc. Amer. Math. Soc.}, \textbf{82}, 640-642 (1981).


\bibitem{Le1} C. Lennard, {$\mathcal{C}_1$ is uniformly Kadec-Klee}, \emph{Proc. Amer. Math. Soc.}, \textbf{109}, 71-77 (1990).

\bibitem{Le2} C. Lennard, {A new convexity property that implies a fixed point property for $L_1$}, \emph{Studia Math.}, \textbf{100}, 95--108 (1991).


\bibitem{Ni2003} E. Nieto, On M-structure and weakly compactly generated Banach spaces, \emph{Proc. Edinb. Math. Soc.} II. Ser. \textbf{46}, No. 3, 679-686 (2003).

\bibitem{Ryan} R.A. Ryan, \emph{Introduction to Tensor Products of Banach Spaces}, Springer, 2002.

\bibitem{Si} B. Simon, {Convergence in trace ideals}, \emph{Proc. Amer. Math. Soc.}, \textbf{83}, 39-43 (1981).

\bibitem{S86} B. Sims, \textit{The existence question for fixed points of nonexpansive maps}, Lecture Notes, Kent State University, 1986.

\end{thebibliography}
\end{document}